\@citea\NAT@hyper@{%
     \NAT@nmfmt{\NAT@nm}%
     \hyper@natlinkbreak{\NAT@aysep\NAT@spacechar}{\@citeb\@extra@b@citeb}%
     \NAT@date}}
\@citea\NAT@nmfmt{\NAT@nm}%
\NAT@spacechar\NAT@hyper@{\NAT@date}}{}{}
\@citea\NAT@hyper@{%
     \NAT@nmfmt{\NAT@nm}%
     \hyper@natlinkbreak{\NAT@spacechar\NAT@@open\if*#1*\else#1\NAT@spacechar\fi}%
       {\@citeb\@extra@b@citeb}%
     \NAT@date}}
\@citea\NAT@nmfmt{\NAT@nm}%
\fi\NAT@hyper@{\NAT@date}}
\newcommand{\e}{{\rm e}}
\newcommand{\E}{{\mathbb E}}
\renewcommand{\P}{{\mathbb P}}
\newcommand{\R}{{\mathbb R}}
\newcommand{\N}{{\mathbb N}}
\newcommand{\T}{{\mathbf T}}
\newcommand{\Bcal}{{\mathcal B}}
\newcommand{\Fcal}{{\mathcal F}}
\newcommand{\Gcal}{{\mathcal G}}
\newcommand{\Hcal}{{\mathcal H}}
\newcommand{\Tcal}{{\mathcal T}}
\DeclareMathOperator{\rk}{rank}
\DeclareMathOperator{\conv}{conv}
\newcommand{\Pol}{{\rm Pol}}
\newcommand{\Id}{{\mathrm{Id}}}
\DeclareMathOperator{\tr}{Tr}
\newtheorem{theorem}{Theorem}
\newtheorem{corollary}[theorem]{Corollary}
\newtheorem{definition}[theorem]{Definition}
\newtheorem{example}[theorem]{Example}
\newtheorem{lemma}[theorem]{Lemma}
\newtheorem{proposition}[theorem]{Proposition}
\newtheorem{remark}[theorem]{Remark}
\numberwithin{equation}{section}
\numberwithin{theorem}{section}
\begin{document}

\title{Markov cubature rules for polynomial processes\footnote{The authors would like to thank the anonymous referees for their careful reading of the manuscript and suggestions. Martin Larsson gratefully acknowledges support from SNF Grant 205121\_163425. The research of Sergio Pulido benefited from the support of the Chair Markets in Transition (F\'ed\'eration Bancaire Fran\c caise) and the project ANR 11-LABX-0019. The research leading to these results has received funding from the European Research Council under the European Union's Seventh Framework Programme (FP/2007-2013) / ERC Grant Agreement n. 307465-POLYTE}}
\author{Damir Filipovi\'c\footnote{EPFL and Swiss Finance Institute, 1015 Lausanne, Switzerland. {\it Email: }damir.filipovic@epfl.ch} \and Martin Larsson\footnote{ETH Zurich, Department of Mathematics, R\"amistrasse 101, CH-8092, Zurich, Switzerland. {\it Email: }martin.larsson@math.ethz.ch} \and Sergio Pulido\footnote{Laboratoire de Math\'ematiques et Mod\'elisation d'\'Evry (LaMME), Universit\'e d'\'Evry-Val-d'Essonne, ENSIIE, Universit\'e Paris-Saclay, UMR CNRS 8071, IBGBI 23 Boulevard de France, 91037 \'Evry Cedex, France. {\it Email: }sergio.pulidonino@ensiie.fr}}
\date{June 10, 2019\\forthcoming in {\it Stochastic Processes and their Applications}}

\maketitle

\begin{abstract}
We study discretizations of polynomial processes using finite state Markov processes satisfying suitable moment matching conditions. The states of these Markov processes together with their transition probabilities can be interpreted as Markov cubature rules. The polynomial property allows us to study such rules using algebraic techniques. Markov cubature rules aid the tractability of path-dependent tasks such as American option pricing in models where the underlying factors are polynomial processes.
\\[2ex] 
\noindent{\textbf {Keywords:} Polynomial process; cubature rule; asymptotic moments; transition rate matrix; transition probabilities; American options.}
\\[2ex]
\noindent{\textbf {MSC2010 subject classifications:} 60G07, 60J25, 60J27, 60J28, 60J10, 91G60, 65C20, 65C30, 60H35, 60F99, 60J60, 60J75, 60H10, 60H20, 60H30.}
\end{abstract}

%%%%%% Intro %%%%%

\section{Introduction}
\label{sec:intro}
Polynomial processes have recently gained popularity thanks to their tractability and flexibility. For instance, they have been applied in financial market models for interest rates \citep{Delbaen/Shirakawa:2002,Zhou2003,Filipovic/Larsson/Trolle:2014}, credit risk \citep{Ackerer/Filipovic:2016}, variance swaps \citep{Filipovic/Gourier/Mancini:2015}, stochastic volatility \citep{Ackerer/Filipovic/Pulido:2016}, stochastic portfolio theory \citep{C:16}, life insurance liabilities \citep{BZ:16}, energy prices \citep{FilipovicLarsonWare:18}, and foreign exchange rates \citep{DeJongetal01,LarsenSorensen07}. Polynomial processes, as considered by\ \cite{Cuchiero/etal:2012} and~\cite{filipovic2016polynomial,FilipovicLarson:17}, are stochastic processes with the property that the conditional expectation of a polynomial is a polynomial of the same or lower degree. This implies that conditional moments can be computed efficiently and accurately, which can be exploited to construct tractable models. Despite these advantages, the tractability of polynomial processes deteriorates as one faces path-dependent tasks such as American option pricing or computation of path-dependent functionals.

In this paper, we develop a method for tackling such problems. We approximate a given polynomial process by a finite state Markov process that matches moments up to a given order. We call such a finite state process a \emph{Markov cubature rule} because the states of the process together with their transition probabilities can be interpreted as cubature rules for the law of the original process at different times. Markov cubature rules facilitate the implementation of polynomial models by simplifying costly computational tasks such as Monte-Carlo simulation and pricing of path-dependent and American options.

The polynomial property allows us to study the existence of Markov cubature rules using algebraic techniques. Contrary to the classical cubature problem, we look for cubature rules that use the same set of cubature points at all times, as this is desirable for numerical applications like the calculation of American option prices in finance. Additionally, the moments to be matched depend on the cubature points chosen. In continuous time, the exact moment matching condition turns out to be too stringent as we explain in Section~\ref{sec:MCandliftedMC}. Instead, we find {\em approximate} Markov cubature rules by solving a quadratic programming problem. This quadratic programming problem arises naturally from our first main result, Theorem~\ref{thm:continuouscubature}, which gives an algebraic and geometric characterization of continuous time Markov cubature rules. While a systematic analysis of computational cost, accuracy, and convergence falls outside the scope of the present paper, we provide numerical examples which indicate that the approximate Markov cubature rules work well in practice. In discrete time, our second main result, Theorem~\ref{thm:discretecubature}, yields existence of Markov cubature rules on an appropriately chosen time grid, under suitable assumptions involving the asymptotic moments of the given polynomial process. The existence of asymptotic moments is a crucial hypothesis and lies at the core of the proof of this theorem.

Approximations by discretization of stochastic models using finite state Markov processes appear regularly in the numerical methods literature. In finance, these techniques have been used in order to price and hedge exotic and American options via finite state Markov chain and binomial tree approximations; see e.g.\ \cite{GS:06,K:06,D:16}. As explained by\ \cite{kushner1984approximation} and~\cite{kushner2013numerical}, these approximations are linked to numerical analysis techniques such as the finite difference method. It is also relevant to mention quantization methods that address the optimal choice of the approximation grid on a finite time domain and in higher dimensional state spaces. Quantization has been employed to price American options by\ \cite{bally2005quantization}, and in the context of polynomial processes by\ \cite{Callegaroetal:17}. In all these cases, discretization happens at two levels: the discretization of the time domain, as it is performed in simulation algorithms, and the discretization of the space domain. We add to this literature by developing a cubature based discretization of stochastic models.

Cubature methods play a crucial role in numerous numerical algorithms. For instance, classical cubature techniques have been applied within the context of filtering in~\cite{arasaratnam2009cubature}. Additionally, the cubature formulas on Wiener space, developed by~\cite{lyonsvictoir}, have been used in multiple applications: in filtering problems by\ \cite{leelyons}, to calculate greeks of financial options by\ \cite{teichmann2006calculating}, and to numerically approximate solutions of Stochastic Differential Equations by\ \cite{bayer2008cubature} and~\cite{doersek2013cubature}, Backward Stochastic Differential Equations (BSDEs) by\ \cite{crisan2012solving,crisan2014second}, and Forward-Backward Stochastic Differential Equations (FBSDEs) by\ \cite{de2015cubature}. Cubature methods ease the calculation of conditional expectations, which are at the core of the above mentioned numerical problems. Contrary to the techniques mentioned in the previous paragraph where discretization is performed in the time and space domains, cubature on Wiener space discretizes path space directly. These cubature rules extend Tchakaloff's cubature theorem, as studied by\ \cite{putinar1997note} and\ \cite{bayer2006proof}, to the Wiener space of continuous paths. Our Markov cubature of polynomial processes provides a practically feasible variant of cubature of stochastic processes, as it is based on elementary matrix exponential calculus.

Our paper is organized as follows. In Section~\ref{sec:2} we define Markov cubature rules and provide some basic facts about polynomial processes. In particular, in Section~\ref{sec:MCandliftedMC} we explain why the notion of Markov cubature rule is too stringent in continuous time. In Section~\ref{sec:cont}, we give algebraic and geometric characterizations of continuous time Markov cubature rules for polynomial processes; see Theorem~\ref{thm:continuouscubature}. Motivated by this result we introduce, in Section~\ref{sec:approxMarkov}, a notion of {\em approximate} continuous time Markov cubature rule, and describe the quadratic programming problem through which it is obtained. The performance of these approximate Markov cubature rules is illustrated through numerical examples. Specifically, in Sections~\ref{sec:AmericanBS} and~\ref{sec:Jacobi} we use them to price American options in the Black--Scholes model and in a Jacobi model of exchange rates. In Section~\ref{sec:discrete}, we study existence of discrete time Markov cubature rules; see Theorem~\ref{thm:discretecubature}. In Section~\ref{sec:negativeweights} we discuss another possible relaxation of the Markov cubature problem by allowing negative weights. However, as we then illustrate, these negative weights are not suitable for numerical computations. The conclusions of our study are summarized in Section~\ref{sec:conclusions}. Appendix~\ref{sec:appendix1} presents results on asymptotic moments of polynomial processes needed throughout the paper, and Appendix~\ref{sec:appendix2} contains the proofs of all results in the main text.

We adopt the following notation: We write $\R_+$ for the set of nonnegative real numbers, $\R_{++}$ for the set of positive real numbers, and $\N$ for the set of positive natural numbers. For $N,M\in\N$, $\R^{N\times M}$ denotes the vector space of $N\times M$ matrices, and by convention $\R^N=\R^{N\times 1}$ consists of column vectors. Given $d\in\N$ and a set $E\subseteq \R^d$, we say that $q$ is a polynomial on $E$ if there exists a polynomial $p$ on $\R^d$ such that $q=p|_E$. Its degree is defined by ${\rm deg }\,q=\min\{{\rm deg }\, p:q=p|_E\}$. We let $\Pol(E)$ and $\Pol_n(E)$ denote the algebra of polynomials on $E$ and the vector space of polynomials on $E$ of degree less than or equal to $n$, respectively. For $N\in \N$ and a set $\mathcal A\subseteq\R^N$ we write $\conv(\mathcal A)$ for the convex hull of $\mathcal A$.

%%%%%% Setup %%%%%

\section{Setup and overview}
\label{sec:2}
Fix a state space $E\subseteq\R^d$. We consider a c\`adl\`ag adapted process $X$ defined on a filtered measurable space $(\Omega,\Fcal,\Fcal_t)$, along with a family of probability measures $\P_x$, $x\in E$, such that $X$ is an $E$-valued Markov processes under each $\P_x$, starting at $X_0=x$. We assume that $X$ admits an extended generator $\Gcal$, whose domain contains all polynomials. That is, we assume
\[
p(X_t) - \int_0^t \Gcal p(X_s)\,ds \quad\text{is a $\P_x$-local martingale}
\]
for every $x\in E$ and every $p\in\Pol(\R^d)$. This implies in particular that $X$ is a semimartingale under each $\P_x$. Moreover, the positive maximum principle holds, in the sense that for any $p\in\Pol(\R^d)$,
\begin{equation} \label{PMP}
\text{if $p(x)=\max_E p$ for some $x\in E$, then $\Gcal p(x) \le 0$.}\footnote{Indeed, suppose $p(x)=\max_E p$, and assume for contradiction $\Gcal p(x)=\delta>0$. Define $M_t=p(X_t)-p(x)-\int_0^t \Gcal p(X_s)ds$ and $\tau=\inf\{t\colon \Gcal p(X_t)\le\delta/2\}$. Then, under $\P_x$, $M^\tau$ is a nonpositive local martingale with $M^\tau_0=0$, hence $M^\tau=0$. On the other hand, $M_{t\wedge\tau}\le-\int_0^{t\wedge\tau}\Gcal p(X_s)ds\le - (\delta/2)(t\wedge\tau)$, which is strictly negative for $t>0$. This contradiction proves $\Gcal p(x)\le0$.}
\end{equation}
In particular, $\Gcal p=0$ on $E$ whenever $p=0$ on $E$, which implies that $\Gcal$ is well-defined as an operator on $\Pol(E)$.\footnote{Indeed, if $p\in\Pol(\R^d)$ is a representative of $q=p|_E \in\Pol(E)$, we define $\Gcal q = \Gcal p|_E$, which is independent of the choice of representative~$p$.}

%%%%%% Setup - MC %%%%%

\subsection{Markov cubature rules}
\label{sec:MCandliftedMC}
Our goal is to construct a time-homogeneous Markov process with finite state space that approximates the process $X$. We base our approximation on moment conditions across initial states and times. With this goal is mind we make the following definition.\begin{definition} \label{D:nMC}
We say that a time-homogeneous Markov process $Y$ with finite state space $E^Y=\{x_1,\ldots,x_M\}\subseteq E$ defines an {\em $n$-Markov cubature rule for $X$ on $\T\subseteq[0,\infty)$} if
\begin{equation}\label{eq:Matchmoments}
\E_{x_i}[p(X_t)] = \E_{x_i}[p(Y_t)]
\end{equation}
holds for all $i=1,\ldots,M$, $t\in\T$, and $p\in\Pol_n(E)$.
\end{definition}

\begin{remark}
In condition~\eqref{eq:Matchmoments}, $\E_{x_i}[p(X_t)]$ denotes the expectation with respect to the probability measure $\P_{x_i}$ while $\E_{x_i}[p(Y_t)]$ denotes the expectation with respect to the probability measure $\P_{x_i}^Y$ associated to the finite state Markov process $Y$. We adopt this convention throughout the paper.\end{remark}

Suppose that $Y$ is a $n$-Markov cubature rule for $X$ on $\T$. The moment-matching condition~\eqref{eq:Matchmoments} can be rewritten as
\begin{equation}\label{E:momentmatch1}
\E_{x_i}[p(X_t)]=\sum_{j=1}^M p(x_j) \P_{x_i}^Y(Y_t=x_j)
\end{equation}
for all $i=1,\ldots,M$, $t\in\T$, and $p\in\Pol_n(E)$. Hence, for any $i=1,\ldots, M$ and  $t\in\T$, the points $x_1,\ldots,x_M$ together with the transition probabilities $ \P_{x_i}^Y(Y_t=x_1),\ldots, \P_{x_i}^Y(Y_t=x_M)$ define an $n$-cubature rule for the law of $X_t$ with respect to $\P_{x_i}$. We highlight that for Markov cubature rules, contrary to classical cubature rules, the matched moments depend on the cubature points, and the same points are used for all times $t\in\T$.  In addition, as stated in Theorem~\ref{thm:timeconsistency} below, the properties of the weights inherited by the Markov property of Y guarantee time-consistency features of these cubature rules for polynomial processes. This time-consistency is desirable to conduct path-dependent computations as the ones presented in the numerical examples in Section~\ref{sec:approxMarkov}.

We will also consider relaxed versions of $n$-Markov cubature rules. Indeed, it turns out that the notion of an $n$-Markov cubature rule is too stringent in general. To see why, suppose $X$ is given as the solution of an SDE of the form
\[
dX_t = \mu(X_t)\,dt + \sigma(X_t)\,dW_t.
\]
Under linear growth conditions on the coefficients, one has the estimate
\[
\E_x[\|X_t-x\|^4] \le \kappa (1+\|x\|^4)\, t^2, \quad 0\le t\le 1,
\]
for all $x\in E$, where $\kappa$ is a constant that only depends on $\mu$ and $\sigma$; see Problem~5.3.15 in~\cite{karatzas1991brownian}. If $Y$ is a $4$-Markov cubature rule for $X$ on $[0,\infty)$, this estimate carries over to $Y$, which in conjunction with the time-homogeneous Markov property yields
\[
\E_x[\|Y_t-Y_s\|^4] = \E_x\left[ \E_{Y_s}[\|Y_{t-s}-Y_0\|^4] \right] \le \kappa \Big(1+\max_{i=1,\ldots,M}\|x_i\|^4\Big)(t-s)^2
\]
for any $x\in E^Y$ and any $s\le t$ with $t-s\le1$. By Kolmogorov's continuity lemma, $Y$ then has a version with continuous paths, which forces it to be constant. Consequently, in the generic case, the diffusion $X$ will not admit any non-trivial $n$-Markov cubature rule on $[0,\infty)$, unless $n<4$. Moreover, by a similar argument, unless $X$ exhibits jumps, it is impossible to construct a non-trivial Markov process $Y$ with countable state space such that~\eqref{eq:Matchmoments}, with $n\geq 4$, holds for all initial conditions. This is a rather severe restriction.

One way to avoid this obstruction is to relax the exact moment matching condition~\eqref{eq:Matchmoments} and allow a process $Y$ whose moments approximate the moments of the original process $X$. This approach is explained in Section~\ref{sec:approxMarkov}. Another possibility is to replace $[0,\infty)$ with a discrete time set~$\T$, in which case one remains within the framework of Definition~\ref{D:nMC}. This approach is pursued in Section~\ref{sec:discrete}. A different relaxation is obtained if negative weights in the cubature rule are allowed. This approach is explained in Section~\ref{sec:negativeweights}.

We will study these relaxations of the Markov cubature problem for polynomial processes. This will allow us to employ algebraic considerations in our study. We give the basic properties of polynomial processes in the next subsection.

%%%%%% Setup - PP %%%%%

\subsection{Polynomial processes}

\begin{definition} \label{D:PP}
The operator $\Gcal$ is called {\em polynomial} if $\Gcal\Pol_n(E) \subseteq \Pol_n(E)$ for all $n\in\N$. In this case $X$ is called a {\em polynomial process}.
\end{definition}

\begin{remark}
In the present paper, $\Gcal$ is assumed to be the extended generator of some given Markov process $X$. We are not concerned with the question of existence of such a process given a candidate operator $\Gcal$. This issue is discussed in \citet{filipovic2016polynomial} for polynomial diffusions.
\end{remark}

If $X$ is a polynomial process, then all mixed moments of $X_t$ are polynomial functions of the initial state. More precisely, fix $n$ and denote by ${N_n}$ the dimension of $\Pol_n(E)$. Let $h_1,\ldots,h_{{N_n}}$ be a basis for $\Pol_n(E)$ and define
\begin{equation} \label{eq:Hn(x)}
\Hcal_n(x) = (h_1(x), \ldots, h_{{N_n}}(x))^\top.
\end{equation}
If $\Gcal$ is polynomial, one has
\begin{equation}\label{matrixG}
\Gcal \Hcal_n(x) = G_n^\top \Hcal_n(x)
\end{equation}
for some matrix $G_n\in\R^{N_n\times N_n}$, where $\Gcal$ acts componentwise on $\Hcal_n$. From this one obtains the following lemma.

\begin{lemma}\label{lem:momentsPP}
Assume $X$ is a polynomial process. Then for any polynomial $p\in\Pol_n(E)$ with coordinate representation $\vec p\in\R^{N_n}$, that is, $p(x)=\Hcal_n(x)^\top\vec p$, one has
\begin{equation}\label{matrixexptG}
\E_x[ p(X_t) ] = \Hcal_n(x)^\top e^{tG_n} \vec p.
\end{equation}
Thus the left-hand side is a polynomial in $\Pol_n(E)$ with coordinate representation $e^{tG_n}\vec p$.
\end{lemma}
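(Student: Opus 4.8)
The plan is to prove Lemma~\ref{lem:momentsPP} by combining the polynomial structure encoded in~\eqref{matrixG} with the local-martingale property of the generator and a standard integrability-plus-Gr\"onwall argument to upgrade the local martingale to a true martingale. Starting from the assumption that $p(X_t)-\int_0^t\Gcal p(X_s)\,ds$ is a $\P_x$-local martingale for each $p\in\Pol(\R^d)$, I would first apply this componentwise to the basis polynomials $h_1,\ldots,h_{N_n}$, so that the vector-valued process $\Hcal_n(X_t)-\int_0^t \Gcal\Hcal_n(X_s)\,ds$ is a local martingale. Using~\eqref{matrixG}, namely $\Gcal\Hcal_n(x)=G_n^\top\Hcal_n(x)$, the integrand becomes $G_n^\top\Hcal_n(X_s)$, so
\[
\Hcal_n(X_t) - \int_0^t G_n^\top \Hcal_n(X_s)\,ds
\]
is an $\R^{N_n}$-valued local martingale under each $\P_x$.

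The key step is to show this local martingale is in fact a true martingale, which is what lets me take expectations. First I would localize along a sequence of stopping times $\tau_k\uparrow\infty$ and take $\P_x$-expectations, obtaining for $u(t):=\E_x[\Hcal_n(X_{t\wedge\tau_k})]$ an integral relation of the form $u(t)=\Hcal_n(x)+\int_0^t G_n^\top u(s)\,ds$ modulo the stopped terms. The cleanest route is to establish a uniform integrability / local boundedness bound: because $\Gcal$ maps $\Pol_n(E)$ into itself, the function $x\mapsto \Gcal(\|x\|^{2n})|_E$ is itself a polynomial of degree at most $2n$, which yields (via the positive maximum principle or a direct Gr\"onwall estimate on $\E_x[\|X_{t\wedge\tau_k}\|^{2n}]$) a bound of the form $\sup_{t\le T}\E_x[\|X_{t\wedge\tau_k}\|^{2n}]\le C(T,x)$ uniform in $k$. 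This polynomial moment bound dominates each component of $\Hcal_n(X)$ (whose degree is at most $n$), so by dominated convergence the stopped martingale converges and the limit is a genuine martingale. I expect this integrability upgrade to be the main obstacle, since it is exactly the point where the local-martingale hypothesis must be promoted using the polynomial property rather than assumed outright.

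Once the martingale property is secured, taking expectations gives the linear ODE
\[
\frac{d}{dt}\,\E_x[\Hcal_n(X_t)] = G_n^\top\, \E_x[\Hcal_n(X_t)], \qquad \E_x[\Hcal_n(X_0)]=\Hcal_n(x),
\]
whose unique solution is $\E_x[\Hcal_n(X_t)] = e^{tG_n^\top}\Hcal_n(x)$. To finish, for $p\in\Pol_n(E)$ with coordinate vector $\vec p$ so that $p(x)=\Hcal_n(x)^\top\vec p$, linearity of expectation gives
\[
\E_x[p(X_t)] = \E_x[\Hcal_n(X_t)]^\top \vec p = \bigl(e^{tG_n^\top}\Hcal_n(x)\bigr)^\top \vec p = \Hcal_n(x)^\top e^{tG_n}\vec p,
\]
which is precisely~\eqref{matrixexptG}; here I used $(e^{tG_n^\top})^\top = e^{tG_n}$. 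Since $e^{tG_n}\vec p\in\R^{N_n}$, the right-hand side is a polynomial in $\Pol_n(E)$ with coordinate representation $e^{tG_n}\vec p$, establishing the final assertion. The only genuinely delicate ingredient is the integrability argument in the second paragraph; the rest is linear algebra and a uniqueness statement for a linear ODE.
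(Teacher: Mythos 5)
Your proposal is correct and follows the same skeleton as the paper's proof: the vector local-martingale equation coming from~\eqref{matrixG}, the moment bound $\E_x[1+\|X_t\|^{2k}]\le(1+\|x\|^{2k})e^{Ct}$ via a Gr\"onwall argument (the paper obtains exactly this bound by citing the argument of~\cite{Cuchiero/etal:2012}), the upgrade from local to true martingale, and then the integral equation whose solution is $e^{tG_n^\top}\Hcal_n(x)$ followed by the transpose computation. Where you genuinely diverge is the upgrade step. The paper uses Lemma~\ref{L:QV process}: the local martingale admits predictable quadratic variation $\langle M,M\rangle_t=\int_0^t(\Gcal p^2-2p\Gcal p)(X_s)\,ds$, and since $\Gcal p^2-2p\Gcal p\in\Pol_{2n}(E)$, the moment bound gives $\E[\langle M,M\rangle_t]<\infty$, hence a square-integrable true martingale. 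You instead pass to the limit along the stopped processes $M_{t\wedge\tau_k}$, which avoids the quadratic-variation computation entirely and is a legitimate, somewhat more elementary alternative. One caveat: ``dominated convergence'' is not quite the right tool here, because you never exhibit a pointwise integrable dominating variable --- that would require control of $\E_x[\sup_{s\le t}\|\Hcal_n(X_s)\|]$, which is unavailable before the martingale property is established. What your estimate actually delivers is an $L^2$-bound uniform in $k$, namely $\sup_k\E_x[M_{t\wedge\tau_k}^2]<\infty$ (each component of $\Hcal_n$ has degree at most $n$, so its square is dominated by a constant times $1+\|x\|^{2n}$, and the Riemann-integral term is handled by Cauchy--Schwarz together with the unstopped bound), hence uniform integrability of $\{M_{t\wedge\tau_k}\}_k$; then $M_{t\wedge\tau_k}\to M_t$ a.s.\ and in $L^1$, and conditional expectations pass to the limit. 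This also requires the Gr\"onwall bound to hold along the stopped times $t\wedge\tau_k$ uniformly in $k$, which, as you note, is precisely what the Gr\"onwall--Fatou argument produces. With the uniform-integrability step stated in place of dominated convergence, your route is complete; the trade-off is that the paper's approach buys the true-martingale property in one line from Lemma~\ref{L:QV process}, whereas yours dispenses with that lemma at the cost of a more hands-on limiting argument.
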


\begin{remark}
As a consequence of Lemma~\ref{lem:momentsPP}, Markov cubature rules for polynomial processes are polynomial processes as well when $\T$ contains an interval around zero.
\end{remark}

We say that the time set $\T$ is stable under differences, if $t-s\in\T$ for all $s,t\in\T$ such that $s\leq t$. This property turns out to be useful for path-dependent computations involving polynomial processes, as we illustrate numerically in Section~\ref{sec:approxMarkov}. The reason is that stability under differences leads to the following time consistency result, which states that not only the one-dimensional marginals satisfy moment matching, but higher dimensional marginals do as well.

\begin{theorem}
\label{thm:timeconsistency}
Suppose that $X$ is a polynomial process and that $\T$ is stable under differences. Let $Y$ be a time-homogeneous Markov process with state space $E^Y=\{x_1,\ldots,x_M\}\subseteq E$. Then the process $Y$ is an $n$-Markov cubature rule for $X$ on $\T$ if and only if given $t_1,\ldots,t_l\in\T$ such that $0\leq t_1\leq\cdots\leq t_l$ and polynomials $p_1,\ldots,p_l\in\Pol_n(E)$ with $\prod_ip_i\in\Pol_n(E)$, we have	
\begin{equation}\label{eq:timeconsistency}
\E_x\left[\prod_{i=1}^lp_i(X_{t_i})\right]=\E_x\left[\prod_{i=1}^lp_i(Y_{t_i})\right]
\end{equation}
for all $x\in E^Y$.
\end{theorem}

\begin{remark}\label{rem:timesetsums}
Assume that $Y$ is an $n$-Markov cubature rule for a polynomial process $X$ on $\T$. Set $\overline{\T}=\{\sum_{i=1}^lt_i:t_i\in\T, l\in\N\}$. The time set $\overline{\T}$ is the smallest subset of $[0,\infty)$ that is stable under sums and contains $\T$.  The argument in the proof of Theorem~\ref{thm:timeconsistency} shows that $Y$ is also an $n$-Markov cubature rule for $X$ on $\overline{\T}$.
\end{remark}

%%%%%% Continuous time MC %%%%%

\section{Continuous time Markov cubature for polynomial processes}
\label{sec:cont}

We assume hereafter that $X$ is a polynomial process, and fix $n\in\N$. We will study characterizations of continuous time $n$-Markov cubatures rules for $X$, namely $n$-Markov cubature rules on $[0,\infty)$. Even though, as explained in Section~\ref{sec:MCandliftedMC}, these cubature rules turn out to be too stringent in general, the results of this section motivate and facilitate the study of relaxed notions of Markov cubature in Sections~\ref{sec:approxMarkov},~\ref{sec:discrete} and~\ref{sec:negativeweights}.

We adopt the notation of Section~\ref{sec:2} but for simplicity we often omit the index $n$. Given points $x_1,\ldots,x_M\in E$ we denote by $H=H(x_1,\ldots,x_M)$ the $M\times N_n$-matrix whose elements are
\begin{equation}
\label{matrixH}
H_{ij}=h_j(x_i)
\end{equation}
for all $i=1,\ldots, M$ and $j=1,\ldots, N_n$. Notice that the  $i$-th row of  the matrix $H\in\R^{M\times N_n}$ is equal to $\Hcal_n(x_i)^\top$ as defined in~\eqref{eq:Hn(x)}.

By~\eqref{matrixG} and~\eqref{matrixexptG} we have
	\begin{align}
		\mathcal{G}h_j(x_i)&=(HG)_{ij}\label{matrixHG},\\	
		\E_{x_i}[h_j(X_t)]&=(H\exp(tG))_{ij}\label{matrixHexptG}	
	\end{align}
for all $i=1,\ldots, M$ and $j=1,\ldots, N_n$. Equations~\eqref{matrixHG}-\eqref{matrixHexptG} establish a relationship between the analytical calculation of the generator and semigroup acting on the function space of polynomials, and an algebraic calculation involving matrix multiplication.

Theorem~\ref{thm:continuouscubature} below is the main characterization theorem for the existence of a continuous time $n$-Markov rule. Before stating the theorem we recall that a \emph{transition rate matrix} is a matrix whose rows add up to zero and off-diagonal elements are nonnegative. We also need the following definition.

\begin{definition}\label{def:pointi_in}
We say that a vector $v\in\R^m$ points into $\conv(\{v_1,\ldots,v_n\})\subset\R^m$ at $v_i$ if there exist $(L_{i,j})_{j\neq i}\in \R_+^{m-1}$ such that
\begin{equation*}
 v=\sum_{j\neq i}L_{i,j}(v_j-v_i).
\end{equation*}
\end{definition}

\begin{theorem}
\label{thm:continuouscubature}
Given a set of points $E^Y=\{x_1,\ldots,x_M\}\subseteq E$ the following statements are equivalent.

\begin{enumerate}
\item\label{thm:continuouscubature:1} There exists a continuous time $n$-Markov cubature rule $Y$ with state space $E^Y$; see Definition~\ref{D:nMC}.
\item\label{thm:continuouscubature:2} Given $H$ as in~\eqref{matrixH}, $HG=LH$ for some transition rate matrix $L\in\R^{M\times M}$.
\item\label{thm:continuouscubature:3} Given $H$ as in~\eqref{matrixH}, $HG=LH$ for some matrix $L\in\R^{M\times M}$ with nonnegative off-diagonal elements.
\item\label{thm:continuouscubature:4} For each $x\in E^Y$ the vector $\Gcal \Hcal_n(x)$ points into $\conv(\{\Hcal_n(x_1),\ldots,\Hcal_n(x_M)\})$ at the point $\Hcal_n(x)$; see Definition~\ref{def:pointi_in}.
\end{enumerate}
	
If in addition $M=N_n$ and the matrix $H$ is invertible, there exists a Lagrange basis of $\Pol_n(E)$, $\widetilde{\beta}=(\widetilde{h}_1,\ldots,\widetilde{h}_{N_n})$, i.e.\ a basis with $\widetilde{h}_j(x_i)=\delta_{ij}$, and the above statements are equivalent to:
\begin{enumerate}
\setcounter{enumi}{4}
\item \label{thm:continuouscubature:5} $\Gcal \widetilde{h}_j(x_i)\geq 0$ for $i\neq j$.
\end{enumerate}
Moreover, when condition~\ref{thm:continuouscubature:2} is satisfied, $L$ can be taken as the transition rate matrix of the $n$-Markov cubature rule $Y$.
\end{theorem}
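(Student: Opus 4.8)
The plan is to prove the cyclic chain of equivalences $\ref{thm:continuouscubature:1}\Rightarrow\ref{thm:continuouscubature:2}\Rightarrow\ref{thm:continuouscubature:3}\Rightarrow\ref{thm:continuouscubature:4}\Rightarrow\ref{thm:continuouscubature:1}$, treating the supplementary equivalence with \ref{thm:continuouscubature:5} and the final claim about $L$ separately at the end. The key translation device is equation~\eqref{matrixHexptG}, which says that the matrix of moments $\E_{x_i}[h_j(X_t)]$ equals $(H\exp(tG))_{ij}$. A candidate rule $Y$ with state space $E^Y$ is time-homogeneous Markov on $M$ points, so it is determined by a transition rate matrix $L$, and by the analogue of~\eqref{matrixHexptG} applied to $Y$ (noting $\Hcal_n$ restricted to $E^Y$ has rows forming $H$), the moments under $Y$ are the entries of $H$ acted on by the semigroup $\exp(tL)$ of $Y$. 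I expect the moment-matching condition~\eqref{eq:Matchmoments} to unwind into the matrix identity $H\exp(tG)=\exp(tL)H$ for all $t\geq0$.

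For $\ref{thm:continuouscubature:1}\Rightarrow\ref{thm:continuouscubature:2}$, I would start from the existence of $Y$ with its transition rate matrix $L$, write moment-matching as $H\exp(tG)=\exp(tL)H$ for all $t$, differentiate at $t=0$ to obtain $HG=LH$, and observe $L$ is automatically a transition rate matrix because it governs a genuine finite-state Markov chain. The implication $\ref{thm:continuouscubature:2}\Rightarrow\ref{thm:continuouscubature:3}$ is immediate since a transition rate matrix has nonnegative off-diagonal entries. For $\ref{thm:continuouscubature:3}\Rightarrow\ref{thm:continuouscubature:4}$, I would read off the $i$-th row of $HG=LH$: using~\eqref{matrixHG}, the $i$-th row of $HG$ is $\Gcal\Hcal_n(x_i)^\top$, while the $i$-th row of $LH$ is $\sum_j L_{ij}\Hcal_n(x_j)^\top$. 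Thus $\Gcal\Hcal_n(x_i)=\sum_j L_{ij}\Hcal_n(x_j)$; splitting off the diagonal term and writing $L_{ii}=-\sum_{j\neq i}L_{ij}$ (which I must justify — see below) converts this into $\Gcal\Hcal_n(x_i)=\sum_{j\neq i}L_{ij}(\Hcal_n(x_j)-\Hcal_n(x_i))$, which is exactly the pointing-into condition of Definition~\ref{def:pointi_in} with $L_{i,j}=L_{ij}\geq0$.

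The main obstacle, and the one implication requiring the positive maximum principle~\eqref{PMP}, is $\ref{thm:continuouscubature:4}\Rightarrow\ref{thm:continuouscubature:1}$, where I must construct a legitimate Markov chain. Given the pointing-into coefficients $(L_{i,j})_{j\neq i}\in\R_+^{m-1}$, I would define off-diagonal rates by $L_{ij}:=L_{i,j}\geq0$ and set $L_{ii}:=-\sum_{j\neq i}L_{ij}$, so that each column (or row, depending on convention) sums appropriately and $L$ becomes a transition rate matrix. By construction the $i$-th row identity $\Gcal\Hcal_n(x_i)=\sum_{j\neq i}L_{ij}(\Hcal_n(x_j)-\Hcal_n(x_i))=\sum_j L_{ij}\Hcal_n(x_j)$ recovers $HG=LH$ in matrix form. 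Letting $Y$ be the time-homogeneous Markov chain generated by $L$, I would then need $H\exp(tG)=\exp(tL)H$ for all $t$; this follows from $HG=LH$ by the standard observation that $HG^k=L^kH$ for all $k$ (by induction), hence $H\exp(tG)=\exp(tL)H$ via the power series, which reverses the computation from step~\ref{thm:continuouscubature:1} and gives~\eqref{eq:Matchmoments}. The delicate point here is verifying that the off-diagonal nonnegativity really does yield a valid transition rate matrix and that the pointing-into representation is compatible with the column-sum-zero convention; this is precisely where one must be careful about the row-versus-column orientation and where the positive maximum principle guarantees consistency of signs.

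Finally, for the supplementary statements assume $M=N_n$ with $H$ invertible. Existence of the Lagrange basis $\widetilde\beta$ follows since invertibility of $H$ lets me change basis so that $\widetilde h_j(x_i)=\delta_{ij}$; concretely $\widetilde\Hcal_n=H^{-\top}\Hcal_n$ componentwise. Under this basis the matrix $\widetilde H=H\cdot(H^{-1})$ equals the identity, and the relation $HG=LH$ transforms into $\widetilde G=L$, where $\widetilde G_{ij}=\Gcal\widetilde h_j(x_i)$. Hence the off-diagonal entries of $L$ coincide with $\Gcal\widetilde h_j(x_i)$ for $i\neq j$, so condition~\ref{thm:continuouscubature:3} (nonnegative off-diagonal $L$) is equivalent to~\ref{thm:continuouscubature:5}. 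The closing sentence that $L$ can be taken as the transition rate matrix of $Y$ is then recorded by noting that the $L$ produced in the construction $\ref{thm:continuouscubature:4}\Rightarrow\ref{thm:continuouscubature:1}$ is exactly the generator of the chain $Y$ realizing the rule.
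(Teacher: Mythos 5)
Your architecture is the same as the paper's: the cycle \ref{thm:continuouscubature:1}$\Rightarrow$\ref{thm:continuouscubature:2}$\Rightarrow$\ref{thm:continuouscubature:3}$\Rightarrow$\ref{thm:continuouscubature:4}$\Rightarrow$\ref{thm:continuouscubature:1}, the translation of \eqref{eq:Matchmoments} into $H\exp(tG)=\exp(tL)H$, differentiation at $t=0$, the induction $HG^k=L^kH$ followed by exponentiation, and the Lagrange-basis reduction for \ref{thm:continuouscubature:5}. However, there is a genuine gap, located exactly at the step you flagged and postponed. In \ref{thm:continuouscubature:3}$\Rightarrow$\ref{thm:continuouscubature:4} you must pass from $\Gcal\Hcal_n(x_i)=\sum_j L_{ij}\Hcal_n(x_j)$ to the cone combination $\sum_{j\neq i}L_{ij}\bigl(\Hcal_n(x_j)-\Hcal_n(x_i)\bigr)$, and this requires $L_{ii}=-\sum_{j\neq i}L_{ij}$, i.e.\ $L\mathbf{1}_M=0$. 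Under \ref{thm:continuouscubature:3} the matrix $L$ is only assumed to have nonnegative off-diagonal entries, so the zero row sums must be \emph{proved}; your promised justification never materializes, and what you substitute for it --- that the positive maximum principle ``guarantees consistency of signs'' --- is not an argument and points at the wrong tool. The paper isolates precisely this missing fact as Lemma~\ref{L:rowsLaddto0}: any matrix $L$ with $HG=LH$ satisfies $L\mathbf{1}_M=0$. The proof is three lines and uses only that $\Gcal$ annihilates constants: if $v\in\R^{N_n}$ is the coordinate vector of the constant polynomial $\mathbf{1}\in\Pol_n(E)$, then $Hv=\mathbf{1}_M$ and $HGv=(\Gcal\mathbf{1}(x_i))_{i=1}^M=0$, hence $L\mathbf{1}_M=LHv=HGv=0$.

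With that lemma supplied, your cycle closes, and you should also delete the appeal to the positive maximum principle in \ref{thm:continuouscubature:4}$\Rightarrow$\ref{thm:continuouscubature:1}: nothing there needs it. You define $L_{ii}:=-\sum_{j\neq i}L_{ij}$ by fiat, so $L$ is a transition rate matrix by construction, and $\exp(tL)$ is automatically a stochastic semigroup (its entries are nonnegative and its rows sum to one, e.g.\ because $\exp(tL)=\lim_n (I+\tfrac{t}{n}L)^n$ and $I+\tfrac{t}{n}L$ has these properties for $n$ large); then $H\exp(tG)=\exp(tL)H$ together with \eqref{matrixHexptG} yields \eqref{eq:Matchmoments}, exactly as in the paper. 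Two smaller points: the closing claim of the theorem concerns the $L$ appearing in \ref{thm:continuouscubature:2}, not the one you build from the pointing-into coefficients, so you should remark that the exponentiation argument applies verbatim to any transition rate matrix $L$ with $HG=LH$, making the chain generated by that $L$ itself a cubature rule; and your treatment of \ref{thm:continuouscubature:5} (pass to the Lagrange basis, under which the evaluation matrix becomes the identity and $HG=LH$ becomes $\widetilde G=L$ with $\widetilde G_{ij}=\Gcal\widetilde h_j(x_i)$) matches the paper and is fine.
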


For the proof of Theorem~\ref{thm:continuouscubature} we will need the following lemma.

\begin{lemma}
\label{L:rowsLaddto0}
Suppose that $L$ is a matrix such that $HG=LH$. Then the rows of $L$ add up to zero.
\end{lemma}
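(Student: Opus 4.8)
The plan is to reduce everything to a single structural fact: the constant polynomial $1$ lies in $\Pol_n(E)$ (as $n\ge 1$) and is annihilated by the generator, $\Gcal 1=0$. Once this is translated into the matrix language of~\eqref{matrixH}--\eqref{matrixHG}, the intertwining relation $HG=LH$ forces $L$ to annihilate the all-ones vector, which is exactly the claimed sum-to-zero property. Crucially, no positivity or invertibility hypothesis on $L$ or on $H$ is needed for this.

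First I would establish $\Gcal 1=0$. This is immediate from the positive maximum principle~\eqref{PMP}: both $p=1$ and $p=-1$ attain their maximum over $E$ at every point, so~\eqref{PMP} gives $\Gcal 1\le 0$ and $-\Gcal 1\le 0$, whence $\Gcal 1=0$ on $E$. Next, since $1\in\Pol_n(E)$, it has a coordinate vector $c\in\R^{N_n}$ in the basis $h_1,\dots,h_{N_n}$, i.e.\ $\sum_j c_j h_j=1$ on $E$. Evaluating at the points $x_1,\dots,x_M$ gives $Hc=\oo$, where $\oo\in\R^M$ is the all-ones vector. Applying $\Gcal$ to $1=\sum_j c_j h_j$, and using $\Gcal 1=0$ together with the identity $\Gcal h_j(x_i)=(HG)_{ij}$ from~\eqref{matrixHG}, yields $(HGc)_i=\sum_j c_j\,\Gcal h_j(x_i)=\Gcal 1(x_i)=0$ for every $i$, that is, $HGc=0$.

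Finally I would feed these two facts into the hypothesis $HG=LH$:
\[
L\oo=L(Hc)=(LH)c=(HG)c=0 .
\]
Hence $L\oo=0$, which is exactly the asserted sum-to-zero property of $L$. The argument is short and I do not expect any genuine obstacle; the only points deserving care are the two ingredients above—that $\Gcal 1=0$ (so the constant is really in the kernel of $\Gcal$) and that $1$ admits a coordinate representation $c$ with $Hc=\oo$—both of which follow directly from the standing assumptions on $\Gcal$ and from $1\in\Pol_n(E)$.
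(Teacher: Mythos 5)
Your proof is correct and follows essentially the same route as the paper's: express the constant polynomial $\mathbf{1}$ in the basis to get $Hc=\oo$, use~\eqref{matrixHG} and $\Gcal\mathbf{1}=0$ to get $HGc=0$, and conclude $L\oo=LHc=HGc=0$. The only difference is that you explicitly justify $\Gcal\mathbf{1}=0$ via the positive maximum principle~\eqref{PMP}, a step the paper leaves implicit.
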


As the proof shows, the conditions in Theorem~\ref{thm:continuouscubature} imply that if $Y$ is an $n$-Markov cubature rule then, for each $x\in E^Y$, the flow $(\E_{x}[\Hcal_n(X_t)])_{t\geq 0}$ never leaves the convex set $\conv(\{\Hcal_n(x_1),\ldots,\Hcal_n(x_M)\})$. Indeed, notice that $(\exp(tL))_{t\geq 0}$ is a transition semigroup and for all $i=1,\ldots, M$ we have
\[
	\E_{x_i}[\Hcal_n(X_t)]=\exp(tG^\top)\Hcal_n(x_i)=\text{$i$-th column of $H^\top\exp(tL^\top)$}.
\]
The points $\{\Hcal_n(x_1),\ldots,\Hcal_n(x_M)\}$ lie on the \emph{moment curve} $\Hcal_n(E)$ and correspond to the rows of $H$. Their convex hull represents all the possible initial distributions of a Markov chain with state space $\{\Hcal_n(x_1),\ldots,\Hcal_n(x_M)\}$.

%%%%%% Approximate Markov cubature %%%%%

\section{Approximate Markov cubature}
\label{sec:approxMarkov}

According to Theorem~\ref{thm:continuouscubature}, in order to find a continuous $n$-Markov cubature rule for a polynomial process $X$ one has to find points $x_1,\ldots,x_M\in E$ and a transition rate matrix $L$ such that
\[
HG=LH,
\]  
where $H$ is the matrix defined by \eqref{matrixH} and $G=G_n$ is the matrix of the generator of $X$ restricted to $\Pol_n(E)$ with respect to the basis $h_1,\ldots,h_{{N_n}}$; see~\eqref{matrixG}. As explained in Section~\ref{sec:MCandliftedMC}, it is actually impossible to solve this problem for polynomial diffusions if $n\ge 4$. In view of this restriction we instead consider the optimization problem
\begin{equation}\label{E:minHG}
\min\{\|HG-LH\|^2: \text{$L$ is a transition rate matrix}\},
\end{equation}
where the Frobenius norm is used, and where we have fixed the generator matrix $G$ and the points $x_1,\ldots,x_M$, hence the matrix $H$.\footnote{Recall that the Frobenius norm of a matrix $A$ is $\|A\|=\sqrt{\tr(AA^\top)}$.} The constraint that $L$ be a transition rate matrix can be written
\begin{align} 
&L_{ij}\ge 0, \quad i\neq j,\label{E:minHGR1onL}\\
&L{\rm 1}_M=0,\label{E:minHGR2onL}
\end{align}
where ${{\rm 1}_M}\in\R^M$ is a vector of ones. Through vectorization we can write this optimization problem as a quadratic programming problem. Indeed,  we have  
\[
\|HG-LH\|^2=\|HG\|^2+{\rm vec}(L)^\top(HH^\top\otimes \Id_M){\rm vec}(L)-2{\rm vec}(L)^\top{\rm vec}(HGH^\top),
\]
where ${\rm vec}(\,\cdot\,)$ is the vectorization operator, $\otimes$ the Kronecker product, and $\Id_M$ the $M$-dimensional identity matrix. In addition, the constrains~\eqref{E:minHGR1onL}-\eqref{E:minHGR2onL} on $L$ correspond to 
\begin{align} 
z^\top{\rm vec}({\rm e}_i{\rm e}_j^\top)&\geq 0, &i\neq j\label{E:minHGR1}\\
z^\top{\rm vec}({\rm e}_i{{\rm 1}_M}^\top)&=0, &i=1,\ldots, M\label{E:minHGR2}
\end{align}
where $z={\rm vec}(L)$ and the ${\rm e}_i$'s are the canonical basis vectors in $\R^M$. Therefore the minimization problem~\eqref{E:minHG} can be translated into the quadratic programming problem
\begin{equation}\label{eq:QLproblem}
\min\{z^\top(HH^\top\otimes \Id_M)z-2z^\top{\rm vec}(HGH^\top): \text{$z\in\R^{m\times m}$ satisfies \eqref{E:minHGR1}-\eqref{E:minHGR2}} \}.
\end{equation}
We will illustrate the performance of this type of finite state Markov approximation through numerical examples. 

%%%%%% American BS %%%%%

\subsection{American option pricing in the Black--Scholes model}
\label{sec:AmericanBS}

We consider a Black--Scholes model where the financial asset's return process $X$ is a Brownian motion with drift. More precisely, $X$ is supposed to have risk-neutral dynamics of the form
\[
X_t=X_0+\left(r-\frac{\sigma^2}{2}\right)t+\sigma W_t
\]
where $r$ is the spot interest rate, $\sigma$ is the volatility of the returns and $W$ is a one-dimensional Brownian motion. In this model, the price at time $t=0$ of an American put option with maturity $T$, strike price $K$, and initial log-price $X_0+x$ is
\begin{equation}
\label{eq:PA}
P_{x}^{A}=\sup\{\E[{\rm e}^{-r\tau}\max(K-{\rm e}^{x+X_\tau},0)]:\text{$0\le \tau\le T$ a stopping time}\}.
\end{equation}
To approximate the value $P_{x}^{A}$ we proceed as follows. We fix equidistant points $x_1,\ldots,x_M$ on the truncated support of the process $X-X_0$ given by 
\[I=[(r-\sigma^2/2)T-3\sigma\sqrt{T},(r-\sigma^2/2)T+3\sigma\sqrt{T}].
\] 
We further fix $n\in\N$ and $h_1(x)=1,h_2(x)=x,\ldots,h_{n+1}(x)=x^n$ the standard monomial basis of $\Pol_n(\R)$. Let $L$ be the solution of the quadratic programming problem~\eqref{eq:QLproblem} and define $Y$ as the finite state process on $E^Y=\{x_1,\ldots,x_M\}$ with transition rate matrix $L$. For $N_{time}\in\N$ let $\Tcal=\{t_0=0,t_1\ldots,t_{N_{time}}=T\}$ be a uniform partition of the time horizon $[0,T]$. We define $\widetilde{P}^{A}=(\widetilde{P}_{x_1}^{A},\ldots,\widetilde{P}_{x_M}^{A})^\top$ by
\[
\widetilde{P}_{x_i}^{A}=\sup\{\E_{x_i}[{\rm e}^{-r\tau}\max(K-{\rm e}^{X_0+Y_\tau},0)]:\text{$0\le \tau\le T$ a stopping time with values in $\Tcal$}\}
\]
for $i=1,\ldots,M$. Since $Y$ is a finite state Markov process and we are only considering finitely many exercise times in $\Tcal$, the vector $\widetilde{P}^{A}$ can be computed through a very simple backward induction algorithm. This computation resembles the calculation of an American option price in a binomial tree approximation of the Black--Scholes model; see \citet{COX1979229}. Explicitly, we have $\widetilde{P}^{A}=V_0$ where 
\begin{equation}\label{eq:backrecursion}
\begin{split}
V_{t_{N_{time}}}&=\max(K-E,0)\\
V_{t_{i-1}} &=\max(\max(K-E,0),\exp(-r\Delta)\exp(\Delta L)V_{t_i}),\quad i=1,\ldots,N_{time}
\end{split}
\end{equation}
with  $E=\exp(X_0)(\exp(x_1),\ldots,\exp(x_M))^\top\in\R^M$ and $\Delta=T/N_{time}$. Observe that this computation gives simultaneously all the values $\widetilde{P}_{x_i}^{A}$ for $i=1,\ldots,M$. For any initial value $x\in I$ we can simply perform an interpolation in order to approximate $P_{x}^{A}$. Moreover, we have that $P_{x}^{A}/{\rm e}^{x}$ is the price of an American put option with strike $K{\rm e}^{-x}$ and initial underlying log-price $X_0$. Hence, the same approximate cubature rule can be used to price American options for several initial values of the log-price and for different strikes. This observation remains valid in any stochastic volatility model framework as long as the dynamics of the volatility process are independent of the initial value of the spot price.

To illustrate the performance of our method we consider the parameters $r=0.06$, $\sigma=0.4$, $X_0=\log(K)=\log(100)$ and $T=0.5$. We compute the approximate American put option prices with $M=40$ cubature points, $n=4$ moments and $N_{time}=1000$ time steps. We compare these prices with the benchmark prices obtained with a 1000--time step binomial tree approximation of the Black--Scholes model. The results are reported in Table~\ref{T:PricesBS}. We find that with these parameters our approximate Markov cubature method has a mean relative difference with respect to the benchmark binomial tree prices of the order $10^{-4}$. The choice of $M$ and $n$ is made to achieve this level of accuracy in a comparable amount of time with as few cubature points and moments as possible. The colormap in Figure~\ref{fig:colormapBS} shows the off-diagonal values of the transition rate matrix $L$. We observe in particular that the majority of nonzero transition rates in the approximate Markov cubature rule occur around the diagonal, hence the process $Y$ has a multinomial tree structure. Also the transition rates decrease as we approach the limits of the interval $I$. The high transition rates close to the limit points of the interval are a boundary effect as a consequence of the truncation of the domain of $X-X_0$. The running time to find the transition rate matrix $L$ by solving the optimization problem \eqref{E:minHG} in Matlab on a 2.3 GHz Intel Core i5 CPU, is approximately 0.75 seconds. Once the transition rate matrix is obtained, the computation of the American option prices using the recursive algorithm \eqref{eq:backrecursion}, for a given maturity, a given strike, and all initial prices, is almost instantaneous and takes only about 0.004 seconds. To illustrate the influence of the moments, we plot in Figure~\ref{fig:American_BS} the American put option prices $\widetilde{P}^A$ for $M=40$ and different values of $n$. These prices are compared with the benchmark values obtained with a 1000 time step binomial approximation of the Black--Scholes model on the 40 points of the log-price grid.

%%%%%% American Jacobi %%%%%

\subsection{American option pricing in a Jacobi exchange rate model}
\label{sec:Jacobi}
Suppose that $S_t=\exp(X_t)$ represents the exchange rate between two currencies at time $t$. Inspired by \cite{DeJongetal01} and \cite{LarsenSorensen07}, we model $X$ with a Jacobi diffusion of the form
\begin{equation}\label{eq:FXrate}
dX_t=\kappa(\theta-X_t)dt+\sigma\sqrt{(X_t-x_{min})(x_{max}-X_t)}dW_t,
\end{equation}
for given parameters $-\infty< x_{min}<x_{max}<\infty$, $\theta\in [x_{min},x_{max}]$, $\kappa, \sigma>0$. We assume that the domestic interest rate $r$ is constant and the foreign interest rate $r^f_t$ is such that~\eqref{eq:FXrate} describes the risk neutral dynamics of the log exchange rate (for details see \cite{DeJongetal01,LarsenSorensen07}). In this model the exchange rate between the currencies stays bounded between $\exp(x_{min})$ and $\exp(x_{max})$. As in the previous example, we consider an American exchange option with payoff of the form $\max(K-S,0)$ and maturity $T$. We fix equidistant points $x_1,\ldots,x_M$ on the support of $X$ given by $[x_{min},x_{max}]$ and proceed precisely as in the previous example. Namely, we approximate $P^A_x$, as in   \eqref{eq:PA}, using the vector $\widetilde P^A$. To compute $\widetilde P^A$ we use the recursive algorithm in \eqref{eq:backrecursion}.

For our numerical illustration we consider the following parameters: $r=0$, $\kappa=1$, $\theta=0.5$, $x_{min}=0$, $x_{max}=1$, $X_0=0$, $K=\exp(0.5)$ and $T=0.5$. We compute the approximate American put option prices with $M=40$ cubature points, $n=4$ moments and $N_{time}=1000$ time steps. We compare these prices with the benchmark prices obtained with a 1000--time step Longstaff--Schwartz algorithm; see~\cite{Longstaff01valuingamerican}. The results are reported in Table~\ref{T:PricesJacobi}. Our experiments suggest that the approximate Markov cubature method can lead to significant speed-up compared to the simulation based Longstaff--Schwartz approach. The colormap in Figure~\ref{fig:colormapJacobi} shows the off-diagonal values of the transition rate matrix $L$. In particular we verify a multinomial nature of our approximate Markov cubature. The running times to find the transition rate matrix $L$ and to compute American option prices are comparable to those reported in Section~\ref{sec:AmericanBS}. To illustrate the influence of the moments, we plot in Figure~\ref{fig:American_Jacobi} the American put option prices $\widetilde{P}^A$ for $M=40$ and different values of $n$ and compare them with the benchmark values obtained with the Longstaff--Schwartz method for the values of $x$ in Table~\ref{T:PricesJacobi}. 

%%%%%% Discrete time MC %%%%%

\section{Discrete time Markov cubature}
\label{sec:discrete}
The construction of a discrete time $n$-Markov cubature rule for $X$ (see Theorem~\ref{thm:discretecubature} below) will use cubature methods over the asymptotic moments.
According to Theorem~\ref{thm:longrunmoments}, all the asymptotic moments of order less than or equal to $n$ exist if and only if the following condition holds.
\begin{enumerate}[label=(H\arabic{*}), ref=(H\arabic{*})]
\item \label{assumptionG} For all nonzero eigenvalues $\lambda$ of $G$, we have that $Re(\lambda)< 0$ and the eigenvalue 0 is a semisimple eigenvalue, i.e.\ its algebraic and geometric multiplicities coincide.
\end{enumerate}
In this case, we denote these \emph{asymptotic moments} by
\begin{equation}
\label{longrunmoments}
\mu_j(x)= \lim_{t\to\infty}\E_x[h_j(X_t)].
\end{equation}

To use classical cubature rules, we would like the asymptotic moments~\eqref{longrunmoments} to be independent of $x$. According to Corollary~\ref{cor:independencemomentsonx}, this is the case under the following assumption, which is a stronger condition than~\ref{assumptionG}.
\begin{enumerate}[label=(H\arabic{*}), ref=(H\arabic{*})]
\setcounter{enumi}{1}
\item \label{assumptionG_2} For all nonzero eigenvalues $\lambda$ of $G$, we have that $Re(\lambda)< 0$ and the eigenvalue 0 is a simple eigenvalue, i.e.\ its algebraic  (and hence geometric) multiplicity is 1.
\end{enumerate}
In this case we write the asymptotic moments~\eqref{longrunmoments} simply as $\mu_1,\ldots,\mu_{N_n}$. In conjunction with~\ref{assumptionG_2}, we will make the following assumption throughout this section.

\begin{enumerate}[label=(H\arabic{*}), ref=(H\arabic{*})]
\setcounter{enumi}{2}
\item \label{assumption_longrunmoments} There exist points $x_1,\ldots,x_M\in E$ and $w\in\R_{++}^M$ such that
\begin{equation}
\label{eq:longrunweights}
\mu_j=\sum_{i=1}^M w_{i}h_j(x_i)
\end{equation}
for all $j=1,\ldots, N_n$.
\end{enumerate}

\begin{remark}
As a consequence of condition~\ref{assumption_longrunmoments} the weights add up to one. Indeed, suppose that the constant polynomial can be written as $\mathbf{1}=\sum_{j=1}^{N_n}v_j h_j(x)$. Then by \eqref{longrunmoments} and \eqref{eq:longrunweights} we deduce
\[
1=\sum_{j=1}^{N_n}v_j \mu_j = \sum_{i=1}^M w_{i} \left(\sum_{j=1}^{N_n}v_j  h_j(x_i)\right) =  \sum_{i=1}^M w_{i}.
\]
Hence, condition~\ref{assumption_longrunmoments} states that the asymptotic moments~\eqref{longrunmoments} belong to $\conv(\Hcal_n(E))$. As $\E_x[h_j(X_t)]$ belongs to $\conv(\Hcal_n(E))$ for all $x\in E$, $t\geq 0$ and $j=1,\ldots, N_n$ (see~\cite{putinar1997note},~\cite{bayer2006proof}), this would be the case if for instance $\conv(\Hcal_n(E))$ is closed. It would also hold if the asymptotic moments are the moments of a probability distribution; see Proposition~\ref{prop:momentlimitdist}. Additionally, as the weights $w$ in~\ref{assumption_longrunmoments} are strictly positive, there does not exist a strict subset $C\subsetneq \{x_1,\ldots,x_M\}$ such that $\conv(\Hcal_n(C))$ contains all the asymtotic moments~\eqref{longrunmoments}.
\end{remark}

Theorem~\ref{thm:discretecubature} below is the main theorem of this section.

\begin{theorem}
\label{thm:discretecubature}
Assume that~\ref{assumptionG_2} and~\ref{assumption_longrunmoments} hold. Suppose additionally that for the points $x_1,\ldots,x_M$ in~\ref{assumption_longrunmoments}, the matrix $H$ given by~\eqref{matrixH} satisfies $\rk(H)=N_n$ . Then, for $\Delta$ large enough, there exists a $n$-Markov cubature rule for $X$ on $\T=\{l\Delta:l\in\N\}$ with state space $E^Y=\{x_1,\ldots,x_M\}$.
\end{theorem}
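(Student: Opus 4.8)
The plan is to turn the moment-matching requirement into a single matrix equation and solve it by perturbation as $\Delta\to\infty$. By Lemma~\ref{lem:momentsPP} and~\eqref{matrixHexptG}, for $p\in\Pol_n(E)$ with coordinate vector $\vec p$ one has $\E_{x_i}[p(X_t)]=(He^{tG}\vec p)_i$. If $Y$ is the time-homogeneous Markov chain on $\{x_1,\ldots,x_M\}$ whose one-step (step-size $\Delta$) transition matrix is a stochastic matrix $P$, then $\E_{x_i}[p(Y_{l\Delta})]=(P^lH\vec p)_i$. Thus $Y$ is an $n$-Markov cubature rule on $\{l\Delta:l\in\N\}$ iff $P^lH=He^{l\Delta G}$ for all $l$, and since this follows by induction from the single relation
\[
PH=He^{\Delta G}, \qquad (\ast)
\]
it suffices to produce, for $\Delta$ large, a stochastic matrix $P$ satisfying $(\ast)$. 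I first observe that the row-sum constraint is automatic: letting $\oo\in\R^M$ be the all-ones vector and $c\in\R^{N_n}$ the coordinate vector of the constant polynomial $1$, one has $Hc=\oo$ and, because $\Gcal 1=0$, also $Gc=0$, so $e^{\Delta G}c=c$. Hence any $P$ solving $(\ast)$ satisfies $P\oo=PHc=He^{\Delta G}c=Hc=\oo$. The only genuine constraint left is therefore nonnegativity of $P$.

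\textbf{Identifying the limit.}
Next I would analyze $(\ast)$ as $\Delta\to\infty$. Under~\ref{assumptionG_2} all nonzero eigenvalues of $G$ have negative real part and $0$ is simple, so $e^{\Delta G}$ converges; by Corollary~\ref{cor:independencemomentsonx} the limit $(He^{\Delta G})_{ij}=\E_{x_i}[h_j(X_\Delta)]\to\mu_j$ is independent of $i$. Writing $\mu=(\mu_1,\ldots,\mu_{N_n})^\top$ gives $He^{\Delta G}\to\oo\mu^\top$. Assumption~\ref{assumption_longrunmoments} reads $\mu^\top=w^\top H$ with $w\in\R_{++}^M$, and testing against the constant polynomial yields $\sum_i w_i=w^\top\oo=\mu^\top c=1$. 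Consequently $P_\infty:=\oo w^\top$ is a \emph{strictly positive} stochastic matrix satisfying $P_\infty H=\oo\mu^\top=\lim_{\Delta\to\infty}He^{\Delta G}$.

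\textbf{A convergent family of exact solutions.}
Since $\rk(H)=N_n$, the matrix $H^+:=(H^\top H)^{-1}H^\top$ is a left inverse, $H^+H=I_{N_n}$. I would then set
\[
P_\Delta:=\oo w^\top+\bigl(He^{\Delta G}-\oo\mu^\top\bigr)H^+,
\]
which solves $(\ast)$ exactly for every $\Delta$ because $P_\Delta H=\oo\mu^\top+(He^{\Delta G}-\oo\mu^\top)=He^{\Delta G}$. As $He^{\Delta G}-\oo\mu^\top\to0$ and $H^+$ is fixed, $P_\Delta\to P_\infty=\oo w^\top$. Every entry of $P_\infty$ equals some $w_j>0$, so $P_\infty$ lies in the open positive orthant; by continuity there is $\Delta_0$ such that $P_\Delta$ has strictly positive entries for all $\Delta\ge\Delta_0$. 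Combined with the automatic row sums, $P_\Delta$ is then a bona fide stochastic matrix solving $(\ast)$, and the Markov chain with one-step transition matrix $P_\Delta$ on $\{x_1,\ldots,x_M\}$ is the desired cubature rule.

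\textbf{Main obstacle.}
The essential difficulty is the nonnegativity constraint, for the general solution set of $(\ast)$ (an affine space of dimension $M-N_n$ per row) need not meet the nonnegative orthant. This is resolved precisely by selecting in the construction a particular solution that converges to the strictly positive matrix $\oo w^\top$ rather than to an arbitrary limit; here both the strict positivity of the cubature weights in~\ref{assumption_longrunmoments} and the state-independence of the asymptotic moments granted by~\ref{assumptionG_2} are indispensable. The largeness of $\Delta$ enters only to make the perturbation term $(He^{\Delta G}-\oo\mu^\top)H^+$ small enough to preserve positivity.
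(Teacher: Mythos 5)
Your proposal is correct and takes essentially the same route as the paper: both arguments reduce the problem to finding, for $\Delta$ large, a positive stochastic matrix $Q$ with $QH=He^{\Delta G}$, identify the limit $He^{\Delta G}\to\oo\mu^\top=(\oo w^\top)H$ using~\ref{assumptionG_2} and~\ref{assumption_longrunmoments}, deduce positivity for large $\Delta$ by continuity, recover the row sums from the constant polynomial (so that $Gc=0$, $Hc=\oo$), and pass from the single time $\Delta$ to $\{l\Delta:l\in\N\}$ via the semigroup/Markov structure (Remark~\ref{rem:timesetsums} in the paper). The only difference is one of implementation: where the paper's Lemma~\ref{lem:existenceofQ} invokes openness of the set $\{\widetilde{W}H:\widetilde{W}\text{ has positive entries}\}$, you exhibit the explicit solution family $P_\Delta=\oo w^\top+\bigl(He^{\Delta G}-\oo\mu^\top\bigr)H^+$ with $H^+=(H^\top H)^{-1}H^\top$, which is in effect a constructive proof of that openness claim.
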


To prove Theorem~\ref{thm:discretecubature} we need the following lemma.

\begin{lemma}
\label{lem:existenceofQ}
Suppose the hypotheses of Theorem~\ref{thm:discretecubature} hold. Then, for $t$ sufficiently large, there exists a probability matrix $Q(t)$ with positive entries such that $H\exp(tG)=Q(t)H$.
\end{lemma}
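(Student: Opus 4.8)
The plan is to read off $H\exp(tG)$ as a matrix of moments and exploit the fact that these moments converge, as $t\to\infty$, to a fixed positive cubature. By Lemma~\ref{lem:momentsPP}, $(H\exp(tG))_{ij}=\E_{x_i}[h_j(X_t)]$, so the $i$-th row of $H\exp(tG)$ records the order-$n$ moments of $X_t$ started at $x_i$. Under~\ref{assumptionG_2}, Corollary~\ref{cor:independencemomentsonx} gives that these moments converge to the asymptotic moments $\mu_j$, which are independent of the starting point, so $H\exp(tG)\to \mathbf 1_M\mu^\top$ entrywise, where $\mathbf 1_M\in\R^M$ is the all-ones vector and $\mu=(\mu_1,\dots,\mu_{N_n})^\top$. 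Assumption~\ref{assumption_longrunmoments} rewrites $\mu^\top=w^\top H$ with $w\in\R_{++}^M$, so the limit equals $\mathbf 1_M w^\top H$. I would therefore single out the candidate limit matrix $Q_\infty:=\mathbf 1_M w^\top\in\R^{M\times M}$, whose entry in column $k$ equals the strictly positive $w_k$, and record that $Q_\infty H=\mathbf 1_M\mu^\top=\lim_{t\to\infty}H\exp(tG)$.

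Next I would build $Q(t)$ as a controlled perturbation of $Q_\infty$. Since $\rk(H)=N_n$ (forcing $N_n\le M$), $H$ has full column rank and admits the explicit left inverse $H^{-L}:=(H^\top H)^{-1}H^\top$ with $H^{-L}H=I_{N_n}$. Define
\[
Q(t):=Q_\infty+\big(H\exp(tG)-\mathbf 1_M\mu^\top\big)H^{-L}.
\]
A one-line check gives $Q(t)H=Q_\infty H+\big(H\exp(tG)-\mathbf 1_M\mu^\top\big)=H\exp(tG)$, so $Q(t)$ solves the required equation for every $t$. Because $H\exp(tG)-\mathbf 1_M\mu^\top\to0$, we have $Q(t)\to Q_\infty$ entrywise; as there are finitely many entries and each limiting value $w_k$ is strictly positive, $Q(t)$ has strictly positive entries for all $t$ large enough.

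Finally I would verify that the row sums equal one, which in fact holds for \emph{any} solution of $Q(t)H=H\exp(tG)$. The constant polynomial $1$ lies in $\Pol_n(E)$, so its coordinate vector $c\in\R^{N_n}$ satisfies $Hc=\mathbf 1_M$; moreover $\Gcal 1=0$ forces $Gc=0$ and hence $\exp(tG)c=c$ (equivalently, $\E_x[1]=1$ for all $t,x$ together with $\rk(H)=N_n$, which makes $\{\Hcal_n(x_i)\}$ span $\R^{N_n}$). Consequently $Q(t)\mathbf 1_M=Q(t)Hc=H\exp(tG)c=Hc=\mathbf 1_M$, so every row of $Q(t)$ sums to one. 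Combining the three steps, for $t$ large enough $Q(t)$ is a probability matrix with positive entries satisfying $H\exp(tG)=Q(t)H$. The one genuinely delicate point is the \emph{choice} of solution: the equation $QH=H\exp(tG)$ is underdetermined, its solution set being an affine space of dimension $M-N_n$ in each row, and a generic left-inverse solution—e.g. the minimum-norm one $H\exp(tG)H^{-L}$—converges to $\mathbf 1_M w^\top H(H^\top H)^{-1}H^\top$, which need not be positive. The construction above is engineered precisely so that $Q(t)$ tracks the strictly positive stochastic limit $Q_\infty$, after which positivity follows by continuity and the row-sum condition follows from the column-space argument.
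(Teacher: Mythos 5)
Your proposal is correct and takes essentially the same approach as the paper: both pass to the limit $H\exp(tG)\to\mathbf{1}_M w^\top H$ (via~\eqref{matrixHexptG}, Corollary~\ref{cor:independencemomentsonx} and Assumption~\ref{assumption_longrunmoments}), obtain positivity of $Q(t)$ for all large $t$ from the strict positivity of $w$, and verify the row sums using the coordinate vector of the constant polynomial $\mathbf{1}$. The only difference is cosmetic: the paper deduces existence of a positive solution from openness of the set $\Bcal=\{\widetilde{W}H:\widetilde{W}\in\R^{M\times M}\text{ has positive entries}\}$, which follows from $\rk(H)=N_n$, whereas your explicit perturbation $Q(t)=Q_\infty+\bigl(H\exp(tG)-\mathbf{1}_M\mu^\top\bigr)(H^\top H)^{-1}H^\top$ is precisely a constructive proof of that openness claim.
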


The proofs of Theorem~\ref{thm:discretecubature} and Lemma~\ref{lem:existenceofQ} suggest a possible procedure for finding discrete cubature rules. In practice, denoting $H^{+}$ the pseudo-inverse matrix of $H$, one searches for large times $t$ so that the matrix $Q(t)=H\exp(tG)H^+$ has positive entries. One then takes $\Delta$ to be the first time such that the entries of $Q(\Delta)$ are nonnegative, in which case $Q(\Delta)$ is the transition probability matrix of a discrete cubature rule with time lag $\Delta$.

The following remark shows that the existence of discrete time Markov cubature rules is true under more general hypotheses.

\begin{remark}
Assume that~\ref{assumptionG} holds. Suppose additionally that there exist points
$$x_1,\ldots,x_M\in E$$
and
$$W=(w_{ij})_{i,j=1}^M\in\R_{++}^{M\times M}$$
such that 	
\begin{equation*}
\mu_j(x_k)=\sum_{i=1}^M w_{ki}h_j(x_i)
\end{equation*}
for all $j=1,\ldots, N_n$ and $k=1,\ldots M$, with $\mu_j$ as in~\eqref{longrunmoments}.  The proof of Theorem~\ref{thm:discretecubature} shows that, if the matrix $H=H(x_1,\ldots,x_M)$, defined in~\eqref{matrixH}, satisfies $\rk(H)=N_n$, then the conclusion of Theorem~\ref{thm:discretecubature} holds.
\end{remark}

\begin{example}
Discrete cubature rules can be useful to perform computations in longer time periods, for example the calculation of prices of European options. To illustrate this we consider the exchange rate model and parameters described in Section~\ref{sec:Jacobi}. To approximate the price of a European put option $P^{E}_x=\E_x[{\rm e}^{-rT}\max(K-{\rm e}^{x+X_T},0)]$ with maturity $T=1$, strike $K=\exp(0.5)$, and initial log rate equal to $x$, we proceed as follows. We fix a regular partition $x_1,\ldots,x_M$ of the support of $X$. In this case the conditions of Theorem~\ref{thm:discretecubature} are satisfied. The asymptotic moments are
\[
\mu_1=\E[1]=1,\quad \mu_2 = \theta= 0.5, \quad \mu_3= 0.3333, \quad \mu_4 = 0.25, \quad \mu_5 = 0.2.
\]
We observe that for $T=1$ there is transition rate matrix $P$ such that $H\exp(TG)=PH$. We approximate the price of the European put options for the points on the partition using the vector 
\[
\widetilde{P}^E=\exp(-rT)PV 
\]
where $V=max(K-F,0)$ and $F=\exp(X_0)(\exp(x_1),\ldots,\exp(x_M))^\top\in\R^M$. Figure~\ref{fig:European_Jacobi} shows these approximate prices along with the prices obtained using Monte-Carlo simulation.
\end{example}

%%%%%% Markov cubature with negative weights %%%%%

\section{Markov cubature with negative weights}
\label{sec:negativeweights}
In this section we explore yet another possible relaxation of the Markov cubature problem. We first recall the definition of the mapping $\Hcal_n$ in~\eqref{eq:Hn(x)}. In the spirit of~\cite{bayer2006proof}, observe that $n$-Markov cubatures rules for the process $X$ correspond naturally to 1-Markov cubature rules for the process $\overline{X}=\Hcal_n(X)$, and that the state space of $\overline{X}$ is $\Hcal_n(E)$, which lies on the moment curve $\Hcal_n(\R^d)$. It can be shown that $\overline{X}$ is a polynomial process on $\Hcal_n(E)$; see \citet[Theorem 4.2]{FilipovicLarson:17}. Hence, the study of $n$-Markov cubatures rules for polynomial processes can be reduced to the study of 1-Markov cubature rules by increasing the complexity of the state space. 

These observations suggest the following alternative way to relax the notion of Markov cubature in continuous time. For each $x\in E$, consider the process $Z^{\overline{x}}_t=\E_{\overline{x}}[\overline{X}_t]$. Due to Lemma~\ref{lem:momentsPP}, the process $Z^{\overline{x}}_t$ solves the ODE
\begin{equation}\label{eq:ODE lifted}
dZ^{\overline{x}}_t = G^\top Z^{\overline{x}}_t \,dt, \quad Z_0 = \overline x.
\end{equation}
While $\overline X$ is only well-defined for initial conditions $\overline x\in\Hcal_n(E)$, whose geometry is highly complex in general, the solution $Z^{\overline x}$ of~\eqref{eq:ODE lifted} admits any point $\overline x\in\R^{N_n}$ as initial condition. Therefore, one could seek continuous 1-cubature rules for the deterministic process $Z$ on $\R^{N_n}$ instead of $\overline{X}$ on $\Hcal_n(E)$. In view of Theorem~\ref{thm:continuouscubature} this amounts to finding points $z_1,\ldots,z_R\in\R^{N_n}$ and a transition rate matrix $L\in\R^{R\times R}$ such that
\begin{equation}\label{eq:SG=LS}
SG=LS,
\end{equation}
where $S\in\R^{R\times N_n}$ is a matrix whose rows are $z_1^\top,\ldots,z_R^\top$. Suppose we can find such a matrix of the form $S=\widetilde{S}H$, with a matrix $\widetilde{S}\in \R^{R\times M}$ of rank $M$. Then there also exists $A\in\R^{M\times R}$ such that $A\widetilde S=\Id_M$ the $M$-dimensional identity matrix, and we can rewrite~\eqref{eq:SG=LS} as
\begin{equation}\label{eq:HG=tildeLH}
HG=\widetilde L H,
\end{equation}
where $\widetilde L= AL\widetilde S\in \R^{M\times M}$. The matrix $\widetilde L$ is not necessarily a transition rate matrix. Nevertheless, due to Lemma~\ref{lem:momentsPP} and~\eqref{eq:HG=tildeLH} we have
\begin{equation}\label{eq:negprobas}
\E_{x_i}[p(X_t)]=\sum_{j=1}^M ({\rm e}^{t\widetilde L})_{i,j}p(x_j),\quad \text{for $i=1,\ldots,M$ and $p\in\Pol_n(E).$}
\end{equation}
Hence, the {\it pseudo transition rate matrix} $\widetilde L$, defines a {\it pseudo Markov cubature rule} with weights that might be negative. These possibly negative weights can be interpreted as the negative ``probabilities'' appearing in the {\it pseudo transition probability matrix} ${\rm e}^{t\widetilde L}$. Therefore, the limitation posed by Kolmogorov's continuity lemma disappears in a framework with negative ``probabilities''. As we will illustrate below, however, negative weights are not compatible with fundamental results such as the dynamic programming principle underlying the pricing of American options. For this reason, we do not pursue this relaxation of the Markov cubature problem.

\begin{example}
To illustrate why negative weights are not compatible with the dynamic programming principle, we consider the setup of Section \ref{sec:AmericanBS} and the same Black--Scholes model parameters. In particular, we employ $M=40$ cubature points, $n=4$ moments and $N_{time}=1000$ to approximate the American put option prices. We compare in Figure \ref{fig:American_negative} the results obtained by using the solution $L$ of the quadratic programming method described in Section~\ref{sec:approxMarkov} with those obtained with a matrix $\widetilde L$ that solves equation~\eqref{eq:HG=tildeLH}. This figure clearly demonstrates that the relaxation with negative ``probabilities'' is not useful for probabilistic applications.
\end{example}

%%%%%% Conclusions %%%%%%%%%
\section{Conclusions}
\label{sec:conclusions}
In this paper we study discretizations of polynomial processes, via moment conditions, using finite state Markov processes. We call these discretizations Markov cubature rules. The polynomial property allows us to conduct our analysis using algebraic techniques; see Theorems~\ref{thm:continuouscubature} and~\ref{thm:discretecubature}. Due to Kolmogorov's continuity lemma the moment matching conditions in continuous time for polynomial diffusions are too stringent. We study instead relaxed versions of Markov cubature rules. A possible relaxation allowing negative transition ``probabilities'' shows not to be useful for probabilistic applications; see Section~\ref{sec:negativeweights}. We instead retain two other relaxations of the Markov cubature problem that are more useful. In Section~\ref{sec:approxMarkov} we show how to find approximate Markov cubature rules by means of a quadratic programming problem. We then illustrate with examples how to employ this approximation to solve time-dependent problems, like the valuation of American options. In these examples the method performs well. Then, in Section~\ref{sec:discrete} we discuss conditions on the asymptotic moments that allow the construction of discrete time Markov cubature rules; see Theorem~\ref{thm:discretecubature}. We also illustrate through a numerical example the use of these discrete rules on longer time grids. A systematic analysis of computational cost, accuracy, and convergence falls outside the scope of the present paper, but is an interesting topic for future research.

%%%%%% Appendix %%%%%
\appendix

%%%%%% Appendix - Asymptotic moments %%%%%

\section{Asymptotic moments of polynomial processes}
\label{sec:appendix1}
Suppose that $X$ is a polynomial process with extended generator $\Gcal$ and state space $E$. Fix $n\in\N$ and let $G$ be the matrix of $\Gcal$ restricted to $\Pol_n(E)$ with respect to a basis $\beta=(h_1,\ldots,h_{N_n})$ of $\Pol_n(E)$.

The following theorem shows that Hypothesis~\ref{assumptionG} is equivalent to the existence of asymptotic moments of order $n$.

\begin{theorem}
\label{thm:longrunmoments}
The following are equivalent:

\begin{enumerate}
\item\label{thm:longrunmoments:1} Hypothesis~\ref{assumptionG} holds.
\item\label{thm:longrunmoments:2} The sequence of matrices $(\exp(tG))_{t\geq 0}$ converges as $t\to\infty$.
\item\label{thm:longrunmoments:3} $\E_x[h_j(X_t)]$ converges as $t\to\infty$ for all $x\in E$  and $j=1,\ldots, N_n$.
\item\label{thm:longrunmoments:4} $\E_x[p(X_t)]$ converges as $t\to\infty$ for all $x\in E$ and $p\in\Pol_n(E)$.
\end{enumerate}

\end{theorem}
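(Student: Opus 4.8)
The plan is to establish the cycle of implications \ref{thm:longrunmoments:2} $\Rightarrow$ \ref{thm:longrunmoments:4} $\Rightarrow$ \ref{thm:longrunmoments:3} $\Rightarrow$ \ref{thm:longrunmoments:2} together with the equivalence \ref{thm:longrunmoments:1} $\Leftrightarrow$ \ref{thm:longrunmoments:2}, which jointly yield all four equivalences. Two of the implications are immediate. For \ref{thm:longrunmoments:2} $\Rightarrow$ \ref{thm:longrunmoments:4}, Lemma~\ref{lem:momentsPP} gives $\E_x[p(X_t)] = \Hcal_n(x)^\top \exp(tG)\,\vec p$, a fixed linear function of the matrix $\exp(tG)$; hence convergence of $\exp(tG)$ forces convergence of every such expectation. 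The implication \ref{thm:longrunmoments:4} $\Rightarrow$ \ref{thm:longrunmoments:3} is the special case $p=h_j$.

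For \ref{thm:longrunmoments:3} $\Rightarrow$ \ref{thm:longrunmoments:2} I would exploit that $h_1,\ldots,h_{N_n}$ is a basis of $\Pol_n(E)$ and hence linearly independent as functions on $E$: the only $c\in\R^{N_n}$ with $c^\top\Hcal_n(x)=0$ for all $x\in E$ is $c=0$, so $\spn\{\Hcal_n(x):x\in E\}=\R^{N_n}$. Writing the coordinate vector of $h_j$ as the standard basis vector $e_j$, condition~\ref{thm:longrunmoments:3} together with~\eqref{matrixexptG} says that $\Hcal_n(x)^\top\exp(tG)\,e_j$ converges for every $x\in E$ and every $j$. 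Choosing points $x^{(1)},\ldots,x^{(N_n)}\in E$ whose vectors $\Hcal_n(x^{(k)})$ form a basis of $\R^{N_n}$, each standard basis vector $e_i$ is a fixed linear combination of these; substituting, the entry $(\exp(tG))_{ij}=e_i^\top\exp(tG)\,e_j$ becomes a fixed linear combination of convergent quantities and therefore converges. Thus every entry of $\exp(tG)$ converges, which is~\ref{thm:longrunmoments:2}.

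The equivalence \ref{thm:longrunmoments:1} $\Leftrightarrow$ \ref{thm:longrunmoments:2} is the analytic heart of the argument and I expect it to be the main obstacle. Passing to a Jordan decomposition $G=PJP^{-1}$, convergence of $\exp(tG)$ is equivalent to convergence of $\exp(tJ)$, which decouples across Jordan blocks. A block with eigenvalue $\lambda$ contributes $e^{\lambda t}$ times an upper-triangular matrix whose entries are the polynomials $t^k/k!$. I would then distinguish three regimes: a block with $Re(\lambda)>0$, or with $\lambda$ purely imaginary and nonzero, diverges or oscillates and so precludes convergence; a block with $Re(\lambda)<0$ contributes a term decaying to zero; and a block with $\lambda=0$ converges precisely when it has size one, since any larger block produces genuinely growing polynomial entries. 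Consequently $\exp(tG)$ converges if and only if every nonzero eigenvalue has $Re(\lambda)<0$ and every Jordan block at $0$ is $1\times1$; the latter is exactly the statement that the algebraic and geometric multiplicities of the eigenvalue $0$ coincide, i.e.\ Assumption~\ref{assumptionG}. The delicate points are the careful bookkeeping translating ``all size-one blocks at $0$'' into the multiplicity condition, and ruling out the borderline oscillatory case $Re(\lambda)=0$ with $\lambda\neq0$.
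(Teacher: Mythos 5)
Your proposal is correct and follows essentially the same route as the paper: the Jordan-form case analysis for (i)$\Leftrightarrow$(ii), the moment formula~\eqref{matrixexptG} for the easy implications, and, for (iii)$\Rightarrow$(ii), the selection of $N_n$ points of $E$ whose vectors $\Hcal_n(x^{(k)})$ span $\R^{N_n}$, so that each entry of $\exp(tG)$ is a fixed linear combination of convergent moments. The only cosmetic difference is in how those points are produced: you extract a basis directly from the spanning set $\{\Hcal_n(x):x\in E\}$ (using linear independence of $h_1,\ldots,h_{N_n}$ on $E$), whereas the paper builds them by a recursive contradiction argument and then invokes equivalence of norms on the finite-dimensional space $\Pol_n(E)$ --- the substance is identical.
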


\begin{proof}
\ref{thm:longrunmoments:1}$\Leftrightarrow$\ref{thm:longrunmoments:2} Suppose that $G=VJV^{-1},$ where $J$ is the (complex) Jordan normal form of $G$. We have that $(\exp(tG))_{t\geq 0}$ converges as $t\to\infty$ if and only if $(\exp(tJ))_{t\geq 0}$ converges as $t\to\infty$. Additionally, $(\exp(tJ))_{t\geq 0}$ converges as $t\to\infty$ if and only if $\exp(tJ_i)$ converges for all $i$, where the $J_i$'s are the Jordan blocks of the matrix $J$.

Each $J_i$ is of the form $J_i=\lambda_i \Id+N_i$ where $\lambda_i$ is an eigenvalue of $G$, $\Id$ is the identity matrix and $N_i$ is a nilpotent matrix. Therefore, $\exp(tJ_i)=\exp(t\lambda_i)p_i(tN_i),$ with $p_i$ a polynomial. We remark that $p_i\equiv 1$ if and only if $N_i=0$, and $p_i(tN_i)$ is not a constant polynomial in $t$ if $N_i\neq 0$.

Hypothesis~\ref{assumptionG} holds if and only if $Re(\lambda_i)<0$ for all $i$ such that $\lambda_i\neq 0$ and if $\lambda_i=0$, $N_i=0$. These observations imply the equivalence between \ref{thm:longrunmoments:1} and \ref{thm:longrunmoments:2}.

\ref{thm:longrunmoments:2}$\Rightarrow$\ref{thm:longrunmoments:3}  Suppose that the matrices $(\exp(tG))_{t\geq 0}$ converge to a matrix $\widetilde{P}\in\R^{N_n\times N_n}$ as $t\to\infty$. By~\eqref{matrixexptG}, we have that
\begin{displaymath}
\lim_{t\to\infty}\E_x[h_j(X_t)]=\sum_{i=1}^{N_n}\widetilde{P}_{ij}h_i(x)
\end{displaymath}
for all $j=1,\ldots, N_n$ and $x\in E$. Hence \ref{thm:longrunmoments:3} holds.

\ref{thm:longrunmoments:3}$\Leftrightarrow$\ref{thm:longrunmoments:4}  This follows from the fact that $h_1,\ldots,h_{N_n}$ is a basis of $\Pol_n(E)$.

\ref{thm:longrunmoments:3}$\Rightarrow$\ref{thm:longrunmoments:2} Suppose now that $\E_x[h_j(X_t)]$ converges for all $x\in E$ and $j=1,\ldots, N_n$, as $t$ goes to infinity.

We claim that there exists $N_n$ points, $x_1,\ldots,x_{N_n}\in E$, such that for all $p\in\Pol_n(E)$
\begin{equation}
\label{eq:nicepoints}
p(x_i)=0\text{ for all $i$ }\Rightarrow p\equiv 0.
\end{equation}

Assume for the sake of contradiction that there are no points $x_1,\ldots,x_{N_n}\in E$ such that~\eqref{eq:nicepoints} holds. Let $p_1(x)\neq 0$ be a polynomial on $E$ and $x_1\in E$ such that $p_1(x_1)\neq 0.$  By assumption, we can find $p_2\in\Pol_n(E)$ and $x_2\in E$ such that $p_2(x_1)=0$ and $p_2(x_2)\neq 0.$  Recursively, we would be able to construct points $x_1,\ldots,x_{N_n}$, and polynomials $p_1,\ldots,p_{N_n}$ such that
\begin{equation}
\label{eq:nicepolynomials}
p_i(x_i)\neq0\text{ and }p_i(x_j)=0\text{ for }j<i.
\end{equation}
These polynomials would be linearly independent and hence a basis of $\Pol_n(E)$.

Assume that $p\in\Pol_n(E)$ satisfies $p(x_i)=0\text{ for all }i.$ As $p$ is a linear combination of the polynomials $p_i$ we would conclude by~\eqref{eq:nicepolynomials} that all the coefficients of the linear combination are equal to zero and $p$ is zero everywhere, a contradiction.

Hence we can always find $x_1,\ldots,x_{N_n}\in E$ such that~\eqref{eq:nicepoints} holds. These points allow us to define a norm on the space $\Pol_n(E)$ by
	$$\|p\|_1=\sup_{i}|p(x_i)|.$$
Another norm is given by
	$$\|p\|_2=\sup_i|\lambda_i|$$
where $p=\sum_j\lambda_jh_j$. As these norms are equivalent, convergence of a sequence of polynomials on $x_1,\ldots,x_{N_n}$ implies convergence of the coefficients. The coefficients of the polynomials of the form $\E_x[h_j(X_t)]$ are entries of the matrix $\exp(tG)$. Hence \ref{thm:longrunmoments:2} holds.
\end{proof}

In general, these asymptotic moments might depend on $x$. In fact we have the following proposition.

\begin{proposition}
Suppose that Hypothesis~\ref{assumptionG} holds. Let $G=V J V^{-1}$ be the canonical Jordan decomposition of $G$, with $V$ the matrix of generalized eigenvectors. Then
\begin{equation}
\label{eq:limitexptG}
\lim_{t\to\infty}e^{tG}=\sum_{i=1}^l v_ir_i,
\end{equation}
where the vectors $v_1,\ldots,v_l$ are the eigenvectors of $G$ corresponding to the eigenvalue 0 and $r_1,\ldots, r_l$ are the rows of $V^{-1}$. Moreover, the asymptotic moments~\eqref{longrunmoments} are given by
\begin{equation}
\label{eq:dependencexlimitingmoments}
(\mu_1(x),\ldots,\mu_{N_n}(x))=\sum_{i=1}^l \Hcal_n(x)^\top v_ir_i.
\end{equation}
\end{proposition}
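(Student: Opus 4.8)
The plan is to compute the limit $\lim_{t\to\infty}e^{tG}$ explicitly from the Jordan decomposition, and then transfer the result to the asymptotic moments via the moment formula~\eqref{matrixexptG}. First I would use Assumption~\ref{assumptionG} to describe the structure of $J$. Writing $J$ as a block-diagonal matrix of Jordan blocks $J_i=\lambda_i\Id+N_i$ with $N_i$ nilpotent, the requirement that the algebraic and geometric multiplicities of the eigenvalue $0$ coincide forces every Jordan block associated with $\lambda_i=0$ to be $1\times 1$; equivalently, $N_i=0$ on those blocks, so the block is just the scalar $0$. All remaining blocks satisfy $Re(\lambda_i)<0$.

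Next I would evaluate $e^{tJ}$ block by block. Since $e^{tG}=Ve^{tJ}V^{-1}$ and $e^{tJ}$ is block diagonal with blocks $e^{tJ_i}=e^{t\lambda_i}e^{tN_i}$, where $e^{tN_i}$ is a matrix-valued polynomial in $t$, each block with $\lambda_i\neq 0$ tends to $0$ as $t\to\infty$ because the factor $e^{t\lambda_i}$ with $Re(\lambda_i)<0$ dominates the polynomial factor. Each $1\times 1$ block with $\lambda_i=0$ equals $1$ for all $t$. Hence $e^{tJ}$ converges to the diagonal matrix $P$ carrying entry $1$ in each position corresponding to a zero eigenvalue and $0$ elsewhere; this both recovers the convergence already established in Theorem~\ref{thm:longrunmoments} and identifies the limit. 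Writing $P=\sum_i e_ie_i^\top$ with the sum over the $l$ indices corresponding to the eigenvalue $0$, I would then conclude $\lim_{t\to\infty}e^{tG}=VPV^{-1}=\sum_i(Ve_i)(e_i^\top V^{-1})=\sum_{i=1}^l v_ir_i$, after relabeling so that $v_1,\ldots,v_l$ are the columns of $V$ (the eigenvectors for eigenvalue $0$) and $r_1,\ldots,r_l$ the corresponding rows of $V^{-1}$. This proves~\eqref{eq:limitexptG}; note the limit is real as a limit of real matrices, even though $V$ may be complex.

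Finally, for the asymptotic moments~\eqref{longrunmoments}, I would apply~\eqref{matrixexptG} with $p=h_j$, whose coordinate vector is the standard basis vector $e_j$, to obtain $\E_x[h_j(X_t)]=\Hcal_n(x)^\top e^{tG}e_j$. Letting $t\to\infty$ and substituting~\eqref{eq:limitexptG} yields $\mu_j(x)=\Hcal_n(x)^\top\big(\sum_{i=1}^l v_ir_i\big)e_j$, which, stacked over $j=1,\ldots,N_n$, gives exactly~\eqref{eq:dependencexlimitingmoments}.

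The main obstacle is minor and purely technical: carefully justifying the decay of the non-semisimple blocks with $Re(\lambda_i)<0$ (the exponential factor beating the polynomial factor $e^{tN_i}$), and the bookkeeping that turns the spectral projection $VPV^{-1}$ into the outer-product sum $\sum_{i=1}^l v_ir_i$ with the correct identification of the $v_i$ and $r_i$. No deeper difficulty is expected beyond this.
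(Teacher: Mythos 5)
Your proposal is correct and follows essentially the same route as the paper: the paper also deduces \eqref{eq:limitexptG} from the Jordan decomposition (citing the block-by-block analysis in the proof of Theorem~\ref{thm:longrunmoments}, where Assumption~\ref{assumptionG} forces the zero-eigenvalue blocks to be $1\times1$ and the blocks with $Re(\lambda)<0$ to decay), and then obtains \eqref{eq:dependencexlimitingmoments} by combining \eqref{eq:limitexptG} with \eqref{matrixexptG} and \eqref{longrunmoments}. You merely spell out the details the paper delegates to that earlier proof, plus the (correct) remark that the limit is real despite $V$ possibly being complex.
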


\begin{proof}
The proof of the equivalence between \ref{thm:longrunmoments:1} and \ref{thm:longrunmoments:2} in Theorem~\ref{thm:longrunmoments} shows that Hypothesis~\ref{assumptionG} implies that
\begin{displaymath}
\lim_{t\to\infty}e^{tG}=V\begin{pmatrix}\Id&0\\0&0\end{pmatrix}V^{-1},
\end{displaymath}
where the identity matrix $\Id$ comes from the block corresponding to the eigenvalue $0$. This implies~\eqref{eq:limitexptG}. Moreover,~\eqref{eq:limitexptG},~\eqref{matrixHexptG} and~\eqref{longrunmoments} imply~\eqref{eq:dependencexlimitingmoments}.
\end{proof}

An immediate corollary of these results characterizes the case when the asymptotic moments are independent of $x$.

\begin{corollary}
\label{cor:independencemomentsonx}
Hypothesis~\ref{assumptionG_2} holds if and only if the asymptotic moments $\mu_1(x),\ldots,\mu_{N_n}(x)$ as defined in~\eqref{longrunmoments} exist and they are independent of $x$, i.e. constant on $E$.
\end{corollary}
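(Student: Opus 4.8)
The plan is to read everything off the proposition immediately preceding the corollary, which under Assumption~\ref{assumptionG} gives the explicit limit $\widetilde P:=\lim_{t\to\infty}e^{tG}$ in~\eqref{eq:limitexptG} and the formula~\eqref{eq:dependencexlimitingmoments} expressing the asymptotic moments as the row vector $(\mu_1(x),\ldots,\mu_{N_n}(x))=\Hcal_n(x)^\top\widetilde P$. Since Assumption~\ref{assumptionG_2} is exactly Assumption~\ref{assumptionG} together with the requirement that the algebraic multiplicity of the eigenvalue $0$ be one, and since Theorem~\ref{thm:longrunmoments} already identifies~\ref{assumptionG} with the existence of the asymptotic moments, the statement reduces to the following claim: under~\ref{assumptionG}, the moments~\eqref{longrunmoments} are independent of $x$ if and only if $0$ is a simple eigenvalue of $G$. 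I would prove this by turning ``independent of $x$'' into a statement about the column space of $\widetilde P$.

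The key preliminary step is to locate the constant polynomial in this picture. Let $u\in\R^{N_n}$ be the coordinate vector of the constant polynomial $1$, so that $\Hcal_n(x)^\top u=1$ for all $x\in E$. Applying the positive maximum principle~\eqref{PMP} to $p=1$ and to $p=-1$ yields $\Gcal 1\le 0$ and $\Gcal 1\ge 0$, hence $\Gcal 1=0$ on $E$; writing this in coordinates through~\eqref{matrixG} gives $\Hcal_n(x)^\top G u=0$ for all $x$, and the linear independence of $h_1,\ldots,h_{N_n}$ on $E$ forces $Gu=0$. Thus $0$ is always an eigenvalue of $G$ with eigenvector $u$, and $e^{tG}u=u$ for every $t$, so $\widetilde Pu=u\neq 0$. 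Next I would use that, the coordinate representation of a polynomial being unique, a polynomial $\Hcal_n(\cdot)^\top c\in\Pol_n(E)$ is constant if and only if $c\in\spn(u)$. Reading~\eqref{eq:dependencexlimitingmoments} off column by column, $\mu_j(\cdot)$ is the polynomial whose coordinate vector is the $j$-th column of $\widetilde P$; hence all the $\mu_j$ are independent of $x$ if and only if every column of $\widetilde P$ lies in $\spn(u)$, that is, $\widetilde P=u\,w^\top$ for some $w\in\R^{N_n}$, in which case $\rk(\widetilde P)\le 1$.

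It then remains to match this with the algebraic multiplicity of $0$. The proof of~\eqref{eq:limitexptG} shows that, under~\ref{assumptionG}, $\widetilde P$ is the spectral projection onto the eigenspace of $G$ for the eigenvalue $0$; since~\ref{assumptionG} makes the geometric and algebraic multiplicities of $0$ coincide, $\rk(\widetilde P)$ equals the algebraic multiplicity of $0$, which is at least $1$ because $u$ is a $0$-eigenvector. For the forward implication, \ref{assumptionG_2} supplies~\ref{assumptionG} and makes $0$ simple, so the image of $\widetilde P$ is the one-dimensional eigenspace $\spn(u)$; every column of $\widetilde P$ is then a multiple of $u$, and the equivalence above gives $x$-independence. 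Conversely, existence of the asymptotic moments gives~\ref{assumptionG} by Theorem~\ref{thm:longrunmoments}, while $x$-independence gives $\rk(\widetilde P)\le 1$; combined with $\rk(\widetilde P)\ge 1$ this forces the algebraic multiplicity of $0$ to equal $1$, which is precisely~\ref{assumptionG_2}. I expect the only non-routine step to be the reformulation in the second paragraph---identifying $u$ as a $0$-eigenvector via $\Gcal 1=0$ and translating $x$-independence of the moments into the column-space condition on $\widetilde P$; once that algebraic dictionary is in place, both implications follow immediately from the Jordan structure.
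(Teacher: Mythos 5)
Your proof is correct and follows essentially the same route as the paper's: both reduce, via Theorem~\ref{thm:longrunmoments} and the formula~\eqref{eq:dependencexlimitingmoments} for $\widetilde P=\lim_{t\to\infty}e^{tG}$, to showing that the asymptotic moments are constant on $E$ if and only if the $0$-eigenspace of $G$ is one-dimensional. The only difference is bookkeeping: the paper argues via linear independence of the eigen-polynomials $\Hcal_n(\cdot)^\top v_i$, while you argue via the rank of the limit projection $\widetilde P$ together with the explicit observation (from the positive maximum principle) that the coordinate vector $u$ of the constant polynomial satisfies $Gu=0$ --- a fact the paper's ``$l=1$ implies constant'' direction uses only implicitly, so your write-up is if anything slightly more complete on that point.
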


\begin{proof}
We already have the equivalence between Hypothesis~\ref{assumptionG} and the existence of the asymptotic moments by Theorem~\ref{thm:longrunmoments}. Moreover, observe that~\eqref{eq:dependencexlimitingmoments} in the previous proposition implies that for all $j=1,\ldots, N_n$
\begin{displaymath}
\mu_j(x)=\sum_{i=1}^l r_i(j)\widetilde{h}_i(x),
\end{displaymath}
where the eigen-polynomials $\widetilde{h}_1,\ldots,\widetilde{h}_l$ (corresponding to the eigenvalue 0 of $\Gcal$) are given by
$$\widetilde{h}_i(x)=\Hcal_n(x)^\top v_i,$$
for all $i=1,\ldots,l$. These polynomials are linearly independent, as polynomials in $\Pol_n(E)$. This linear independence implies that $\mu_j(x)$ is constant on $E$ for all $j$ if and only if $l=1$.
\end{proof}

\begin{example}
Suppose that $X$ is a polynomial \textit{martingale}. This holds when $G_1=0$, where $G_1$ is the matrix of the generator restricted to the space $\Pol_1(E)$. A particular example is geometric Brownian motion. In this case we have that $\E_x[X_t]=x$ for all $t\geq 0$ and $x\in E$, and hence,
 \begin{displaymath}
 \lim_{t\to\infty} \E_x[X_t]=x.
\end{displaymath}
In this example, $0$, as an eigenvalue of $G_1$, has algebraic multiplicity 2.
\end{example}

\begin{example}
Suppose that $d=1$ and $\Gcal f(x)=-xf'(x)+x^2f''(x)$. Then
 \begin{displaymath}
 \lim_{t\to\infty} \E_x[X_t]=0;\quad\lim_{t\to\infty} \E_x[X^2_t]=x^2.
\end{displaymath}
In this example, $0$ has multiplicity 1 as an eigenvalue of $G_1$ (the matrix of the generator $\Gcal$ restricted to $\Pol_1(E)$.) However, 0 has algebraic multiplicity 2 as an eigenvalue of $G_2$ (the matrix of the generator $\Gcal$ restricted to $\Pol_2(E)$). The operator $\Gcal$ is the infinitesimal generator of the process $X_t=X_0{\rm e}^{-2t+\sqrt{2}B_t}$, where $B$ is a Brownian motion. In this case, $X_t$ is a supermartingale converging to zero in expectation while $X_t^2=X_0^2{\rm e}^{-4t+2\sqrt{2}B_t}$ is a martingale starting at $X_0^2$.
\end{example}

The following proposition gives sufficient conditions under which the limiting moments $\mu_j(x)$ are moments of a positive Borel measure.

\begin{proposition}
\label{prop:momentlimitdist}
Let $G_{n+1}$ be the matrix of the generator restricted to the space $\Pol_{n+1}(E)$ with respect to an extended basis $\widetilde{\beta}=(h_1,\ldots,h_{N_n},\ldots,h_R)$ of $\Pol_{n+1}(E)$. Assume that~\ref{assumptionG} holds for $G_{n+1}$. Then for all $x\in E$ there exists a positive Borel measure $\pi_x$ such that
\begin{equation}
\label{eq:10}
\int_Eh_j(y)\pi_x(dy)=\mu_j(x)
\end{equation}
for all $j=1,\ldots, N_n$.
\end{proposition}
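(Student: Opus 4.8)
The plan is to encode the asymptotic moments as the values of a single positive linear functional and then to produce the measure by a weak-$*$ compactness argument on a compactification of $E$, using the one extra degree of control to kill the mass that escapes to infinity. First I would fix the extended basis $\widetilde\beta=(h_1,\ldots,h_{N_n},\ldots,h_R)$ of $\Pol_{n+1}(E)$ and define, for each $x\in E$,
\[
L_x(p)=\lim_{t\to\infty}\E_x[p(X_t)],\qquad p\in\Pol_{n+1}(E).
\]
Since~\ref{assumptionG} is assumed for $G_{n+1}$, Theorem~\ref{thm:longrunmoments} (applied at level $n+1$) guarantees that this limit exists for every $p\in\Pol_{n+1}(E)$, so $L_x$ is a well-defined linear functional with $L_x(h_j)=\mu_j(x)$ for $j=1,\ldots,N_n$, in the notation of~\eqref{longrunmoments}. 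The decisive property is positivity: because $X_t$ takes values in $E$ under $\P_x$, any $p\in\Pol_{n+1}(E)$ with $p\ge 0$ on $E$ satisfies $\E_x[p(X_t)]\ge 0$ for every $t$, and therefore $L_x(p)\ge 0$.

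The goal is then a positive Borel measure $\pi_x$ with $\int_E h_j\,d\pi_x=L_x(h_j)$ for $j=1,\ldots,N_n$. I would stress at the outset that the naive candidate — a weak subsequential limit of the laws of $X_t$ under $\P_x$ on $\R^d$, which exists by tightness once $\E_x[\lVert X_t\rVert^2]$ is bounded (automatic for $n\ge 1$; the cases $n=0$ and $E$ bounded being trivial) — is in general insufficient: mass can leak to infinity in a manner that leaves all moments up to order $n+1$ convergent while changing the moments of order $n$, so that the weak limit under-represents the top degree. To correct for this I would homogenize at degree $n+1$. Introducing the projective (sphere-at-infinity) compactification $\widehat E$, one homogenizes each $p\in\Pol_{n+1}(E)$ to a form of degree $n+1$ that extends continuously to $\widehat E$, pushes the laws of $X_t$ forward to probability measures on the compact set $\widehat E$, and extracts a weak-$*$ limit $\widehat\pi$. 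By the positivity established above, $\widehat\pi$ is a positive measure representing $L_x$ on the homogenized degree-$(n+1)$ functions; this is the truncated-moment/Tchakaloff circle of ideas in the spirit of~\cite{putinar1997note} and~\cite{bayer2006proof}.

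The final step is to dehomogenize. Writing $\partial_\infty\widehat E$ for the boundary at infinity, the homogenization of any $p\in\Pol_n(E)$ carries a strictly positive power of the affine coordinate and hence vanishes on $\partial_\infty\widehat E$; consequently the restriction $\pi_x$ of $\widehat\pi$ to the finite part $\overline E\subseteq\widehat E$ satisfies $\int_E h_j\,d\pi_x=L_x(h_j)=\mu_j(x)$ for all $j=1,\ldots,N_n$, and $\pi_x$ is a positive Borel measure on $\overline E$ (equal to $E$ when the state space is closed). This is exactly where the single extra degree is indispensable: homogenizing at degree $n+1$ makes precisely the polynomials of degree at most $n$ — but not those of degree $n+1$ — vanish at infinity, so the escaping mass is invisible to the moments we must match, while the control afforded by~\ref{assumptionG} for $G_{n+1}$ keeps the homogenized functions continuous on the compactification. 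The main obstacle is thus the careful setup of the homogenization and the verification that positivity transfers to $\widehat\pi$; once this bookkeeping is in place, the vanishing of the degree-$\le n$ forms on $\partial_\infty\widehat E$ delivers the representing measure.
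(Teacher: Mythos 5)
There is a genuine gap at the heart of your compactness step: the measure $\widehat\pi$ you extract does not represent $L_x$. Under the standard embedding $\iota(y)=(1,y)/\sqrt{1+\|y\|^2}$ of $E$ into the closed hemisphere, the degree-$(n+1)$ homogenization $\widehat p$ of $p\in\Pol_{n+1}(E)$ satisfies $\widehat p(\iota(y))=(1+\|y\|^2)^{-(n+1)/2}p(y)$, so integrating $\widehat p$ against the pushforward of the \emph{law} of $X_t$ yields $\E_x\bigl[p(X_t)(1+\|X_t\|^2)^{-(n+1)/2}\bigr]$, not $\E_x[p(X_t)]$. To represent moments you must instead push forward the weighted measures $(1+\|y\|^2)^{(n+1)/2}\,d(\P_x\circ X_t^{-1})$; these are no longer probability measures, and extracting a weak-$*$ limit then requires the uniform mass bound $\sup_{t\ge0}\E_x[(1+\|X_t\|^2)^{(n+1)/2}]<\infty$, which you never establish. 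This bound is not automatic: when $n$ is even (so $n+1$ is odd), convergence of all polynomial moments of degree at most $n+1$ does not imply it, because odd moments can cancel. For instance, the measures $\mu_k=(1-2\epsilon_k)\delta_0+\epsilon_k\delta_{a_k}+\epsilon_k\delta_{-a_k}$ on $\R$ with $a_k\to\infty$ and $\epsilon_k a_k^n$ constant have every moment of degree at most $n+1$ convergent, while $\int(1+y^2)^{(n+1)/2}\,d\mu_k\to\infty$. So the escaping-mass phenomenon you set out to defeat is not defeated by homogenization: it reappears as unbounded total mass of the weighted measures, precisely where your argument needs compactness. (Also, continuity of the homogenized forms on the compactification is automatic and has nothing to do with~\ref{assumptionG}; that assumption enters only through Theorem~\ref{thm:longrunmoments}.)

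A second, structural remark: the ``naive candidate'' you reject is exactly the paper's proof. The paper takes the tight family $\P_x\circ X_t^{-1}$, extracts a weak accumulation point $\pi_x$, and passes the moments of degree at most $n$ to the limit by showing that $(h_j(X_t))_{t\ge0}$ is uniformly integrable (de La Vall\'ee-Poussin, using the extra degree of moment control furnished by assuming~\ref{assumptionG} for $G_{n+1}$), finishing with a truncation/Fatou argument to obtain~\eqref{eq:10}. Uniform integrability is precisely the statement that escaping mass cannot distort moments of degree at most $n$, which is the scenario you claim invalidates this route. In fact your repaired construction and the paper's proof need the identical ingredient --- a uniformly controlled dominating function of strictly super-degree-$n$ growth --- and homogenization buys nothing beyond it; as written, your proposal both lacks that ingredient and mis-states the representation property of $\widehat\pi$.
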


\begin{proof}
Let $x\in E$ and $j=1,\ldots, N_n$ be fixed. We have by Theorem~\ref{thm:longrunmoments} that $\E_x[f(X_t)]$ converges as $t\to\infty$ for any polynomial $f\in\Pol_{n+1}(E)$. Define $Y_t= h_j(X_t).$  De La Vall\'{e}e-Poussin's theorem implies that $(Y_t)_{t\geq 0}$ is uniformly integrable. Additionally, we have that the sequence of Borel probability measures on $E$ given by $(\P_x\circ X_t^{-1})_{t\geq 0}$ is tight.

Let $\pi_x$ be an accumulation Borel probability measure of this sequence. We conclude that~\eqref{eq:10} holds. Indeed, assume with out loss of generality that $\P_x\circ X_t^{-1}$ converges in distribution to  $\pi_x$. By Fatou's lemma
\begin{align*}
\int_E|h_j(y)|\pi(dy)&=\int_0^\infty \pi(|h_j(y)|>z)dz\\
&\leq \liminf_{t\to\infty}\int_0^\infty \P_x(|Y_t|>z)dz\\
&=\liminf_{t\to\infty}\E[|Y_t|]<\infty.
\end{align*}
Therefore, given $j=1,\ldots N_n$ and $\epsilon>0$, there exist constants $C,T>0$ such that $\E_x[|Y_t|1_{|Y_t|>C}]<\epsilon$ for all $t\geq 0$, $\int_{|h_j(y)|>C}|h_j(y)|\pi(dy)<\epsilon$ and for $t\geq T$
\begin{displaymath}
\left|\E_x[Y_t1_{|Y_t|\leq C}]-\int_{|h_j(y)|\leq C}h_j(y)\pi(dy)\right|<\epsilon.
\end{displaymath}
Hence, for $t\geq T$
\begin{align*}
\left|\E_x[Y_t]-\int_EH_j(y)\pi_x(dy)\right|&\leq \left|\E_x[Y_t1_{|Y_t|\leq C}]-\int_{|h_j(y)|\leq C}h_j(y)\pi(dy)\right|\\
&+\E_x[|Y_t|1_{|Y_t|>C}]+\int_E|h_j(y)|1_{|h_t(y)|>C}\pi(dy)\\
&\leq 3\epsilon.
\end{align*}
Since $\epsilon>0$ was arbitrary we obtain~\eqref{eq:10}; see also Theorem 3.5 in~\cite{billingsley}.
\end{proof}

\begin{remark}
In the proof of Proposition~\ref{prop:momentlimitdist}, the existence of the asymptotic moments up to order $n+1$ is simply used in order to apply De La Vall\'{e}e-Poussin's theorem and to deduce uniform integrability.
\end{remark}

In some cases the measure $\pi_x$ of the proposition above is not necessarily an invariant measure. The following example illustrates this.
\begin{example}
Suppose that $X$ is an exponential Brownian motion. In particular $X$ is a martingale and $\E_x[X_t]=x$ for all $t,x\geq 0$. Hence, $\mu_2(x)=x$, where $\mu_2$ is the asymptotic mean. In this case $\pi_x=\delta_x$ which is not an invariant measure for $x>0$.
\end{example}

%%%%%% Appendix - Proofs %%%%%

\section{Proofs}
\label{sec:appendix2}

\begin{proof}[Proof of Lemma~\ref{lem:momentsPP}]
In view of~\eqref{matrixG} we obtain the vector equation
\begin{equation} \label{eq:ODE moments}
\Hcal_n(X_t) = \Hcal_n(x) + \int_0^t G_n^\top \Hcal_n(X_s)ds + M_t, \qquad t\ge0,
\end{equation}
for some local martingale $M$ with $N_n$ components. We claim that the expectation $\E[\|\Hcal_n(X_t)\|]$ is locally bounded in $t$. This follows from the inequality
\[
\E_x\big[ 1 + \|X_t\|^{2k} \big] \le \big(1+\|x\|^{2k}\big)e^{Ct}, \qquad t\ge0,
\]
which holds for some constant $C>0$ that depends on $\Gcal$ but not on $t$ or $x$. This inequality is proved using the argument in the proof of Theorem 2.10 in~\cite{Cuchiero/etal:2012}. Furthermore, in conjunction with Lemma~\ref{L:QV process} below, this also implies that $M$ is a true martingale. Taking expectations on both sides of~\eqref{eq:ODE moments} thus yields the integral equation
\[
\E[\Hcal_n(X_t)] = \Hcal_n(x) + \int_0^t G_n^\top \E[\Hcal_n(X_s)]ds, \qquad t\ge0,
\]
whose solution is $\E[\Hcal_n(X_t)] = e^{tG_n^\top} \Hcal_n(x)$. This yields~\eqref{matrixexptG}.
\end{proof}

\begin{lemma} \label{L:QV process}
Let $p\in\Pol(E)$. The local martingale $M_t=p(X_t)-\int_0^t \Gcal p(X_s)ds$ admits a predictable quadratic variation process, given by $\langle M,M\rangle_t = \int_0^t (\Gcal p^2 - 2p\Gcal p)(X_s)ds$.
\end{lemma}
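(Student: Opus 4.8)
The plan is to obtain the claimed formula by comparing two semimartingale decompositions of the process $p(X)^2$. Since both $p$ and $p^2$ lie in $\Pol(E)$, the generator identity from the setup (the defining local-martingale property of $\Gcal$) furnishes two local martingales,
\[
M_t = p(X_t) - \int_0^t \Gcal p(X_s)\,ds, \qquad N_t = p(X_t)^2 - \int_0^t \Gcal p^2(X_s)\,ds .
\]
Writing $A_t = \int_0^t \Gcal p(X_s)\,ds$, which is a continuous process of finite variation, we have $p(X) = M + A$ and hence the optional quadratic variations satisfy $[p(X),p(X)] = [M,M]$, because the finite-variation continuous part $A$ contributes nothing to the bracket.

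Next I would apply integration by parts to $p(X)^2$. Using $p(X_t)^2 = p(X_0)^2 + 2\int_0^t p(X_{s-})\,dp(X_s) + [M,M]_t$ together with the decomposition $dp(X_s) = dM_s + \Gcal p(X_s)\,ds$, one gets
\[
p(X_t)^2 = p(X_0)^2 + 2\int_0^t p(X_{s-})\,dM_s + 2\int_0^t p(X_s)\Gcal p(X_s)\,ds + [M,M]_t ,
\]
where I have replaced $p(X_{s-})$ by $p(X_s)$ inside the $ds$-integral since Lebesgue measure does not charge the (countable) jump times. The stochastic integral $\int_0^t p(X_{s-})\,dM_s$ is a local martingale because its integrand is càglàd, hence predictable and locally bounded along the càdlàg path of $X$. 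Comparing this with the decomposition $p(X_t)^2 = p(X_0)^2 + N_t + \int_0^t \Gcal p^2(X_s)\,ds$ coming from $N$, and cancelling the common terms, yields
\[
[M,M]_t - \int_0^t \big(\Gcal p^2 - 2p\,\Gcal p\big)(X_s)\,ds = N_t - 2\int_0^t p(X_{s-})\,dM_s ,
\]
whose right-hand side is a local martingale.

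Finally I would identify the left-hand candidate as the predictable quadratic variation. The process $C_t = \int_0^t (\Gcal p^2 - 2p\,\Gcal p)(X_s)\,ds$ is continuous and adapted, hence predictable, and of finite variation; the display above shows $[M,M] - C$ is a local martingale. By uniqueness of the decomposition of the increasing process $[M,M]$ into a local martingale plus a predictable finite-variation part (any two such predictable parts differ by a predictable finite-variation local martingale started at $0$, hence vanish), $C$ is precisely the predictable compensator of $[M,M]$. This simultaneously establishes that $\langle M,M\rangle$ exists and gives $\langle M,M\rangle_t = \int_0^t (\Gcal p^2 - 2p\,\Gcal p)(X_s)\,ds$; moreover, since $[M,M]$ is increasing, $C$ must be increasing, so the integrand $\Gcal p^2 - 2p\,\Gcal p$ (the carré-du-champ of $p$) is automatically nonnegative along the paths of $X$, consistent with the positive maximum principle~\eqref{PMP}. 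The main point requiring care is the bookkeeping of jumps in the integration-by-parts step and the local-boundedness justification for the stochastic integral against $M$; the concluding identification is then a direct appeal to the uniqueness of the Doob--Meyer compensator.
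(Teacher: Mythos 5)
Your proof is correct, and it rests on the same two pillars as the paper's — the generator decomposition of both $p(X)$ and $p^2(X)$, followed by uniqueness of the predictable compensator — but the middle of the argument runs through genuinely different machinery. The paper never introduces the optional bracket $[M,M]$ or the semimartingale It\^o formula: it squares $M_t=p(X_t)-\int_0^t\Gcal p(X_s)\,ds$ directly, converts $M_t\int_0^t\Gcal p(X_s)\,ds$ into $\int_0^t M_s\Gcal p(X_s)\,ds$ plus a local martingale via the product rule against a continuous finite-variation integrand, and cancels the remaining term with the deterministic identity $(\int_0^t g(s)\,ds)^2=2\int_0^t g(s)\int_0^s g(u)\,du\,ds$; this yields that $M_t^2-\int_0^t(\Gcal p^2-2p\Gcal p)(X_s)\,ds$ is a local martingale, and $\langle M,M\rangle$ is identified as the predictable compensator of $M^2$. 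You instead expand $p(X_t)^2$ by integration by parts for semimartingales, use $[p(X),p(X)]=[M,M]$, and identify $\langle M,M\rangle$ as the predictable compensator of $[M,M]$. The two identification steps are equivalent, since $M^2-[M,M]=2\int_0^\cdot M_{s-}\,dM_s$ is a local martingale, so compensating $M^2$ and compensating $[M,M]$ single out the same predictable process. What your route buys is explicitness: the jump bookkeeping ($p(X_{s-})$ versus $p(X_s)$ under $ds$), the local-martingale status of the stochastic integral, and the observation that the carr\'e-du-champ $\Gcal p^2-2p\Gcal p$ is nonnegative along the paths of $X$ all come out in the open; what the paper's route buys is economy, as it needs nothing beyond elementary algebra and the product rule. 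One cosmetic slip on your side: as written, $N_t=p(X_t)^2-\int_0^t\Gcal p^2(X_s)\,ds$ has $N_0=p(X_0)^2$, so in your display you should either drop the term $p(X_0)^2$ or recenter $N$ as $p(X_t)^2-p(X_0)^2-\int_0^t\Gcal p^2(X_s)\,ds$; this constant is harmless for the local-martingale property and does not affect the conclusion.
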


\begin{proof}
Squaring the expression for $M_t$ and rearranging yields
\begin{align*}
p(X_t)^2 - M_t^2
&= 2p(X_t)\int_0^t \Gcal p(X_s)ds - \left( \int_0^t \Gcal p(X_s)ds\right)^2 \\
&= 2\left(M_t + \int_0^t \Gcal p(X_s)ds\right)\int_0^t \Gcal p(X_s)ds - \left( \int_0^t \Gcal p(X_s)ds\right)^2 \\
&= 2 M_t \int_0^t \Gcal p(X_s)ds + \left( \int_0^t \Gcal p(X_s)ds\right)^2 \\
&= 2 \int_0^t M_s \Gcal p(X_s)ds + \left( \int_0^t \Gcal p(X_s)ds\right)^2 + \text{(local martingale)} \\
&= 2 \int_0^t \left(p(X_s) - \int_0^s\Gcal p(X_u)du\right) \Gcal p(X_s)ds \\
&\qquad\qquad\qquad\qquad + \left( \int_0^t \Gcal p(X_s)ds\right)^2 + \text{(local martingale)} \\
&=2 \int_0^t p(X_s) \Gcal p(X_s)ds + \text{(local martingale)},
\end{align*}
where the last equality follows from the identity $(\int_0^t g(s)ds)^2=2\int_0^t g(s) \int_0^s g(u)du\,ds$ with $g(t)=\Gcal p(X_t)$. Therefore, since $p^2$ is also a polynomial and hence in the domain of $\Gcal$, we obtain
\[
M^2_t - \int_0^t \left( \Gcal p^2 (X_s) - 2p(X_s)\Gcal p(X_s)\right) ds = \text{(local martingale)}.
\]
This implies the assertion of the lemma.
\end{proof}

\begin{proof}[Proof of Theorem~\ref{thm:timeconsistency}]
Clearly if~\eqref{eq:timeconsistency} holds then $Y$ is an $n$-Markov cubature rule for $X$ on $\T$. Conversely, suppose that $Y$ is an $n$-Markov cubature rule for $X$ on $\T$. By an induction argument it is enough to show~\eqref{eq:timeconsistency} with $l=2$. To this end, fix $p,q\in\Pol_n(E)$ with $pq\in\Pol_n(E)$ and let $s,t\in\T$ be such that $0\leq s\leq t$. Define the function
\begin{displaymath}
\widetilde{p}(x)= \E_x[p(X_{t-s})].
\end{displaymath}
Since $X$ is a polynomial process, by Lemma~\ref{lem:momentsPP} the function $\widetilde{p}$ is a polynomial and $\widetilde{p}q\in\Pol_n(E)$. On the other hand, by the definition of a Markov cubature rule and the stability under differences of $\T$ we have
\begin{displaymath}
\begin{split}
\E_x[\widetilde{p}(X_s)q(X_s)]&=\E_x[\widetilde{p}(Y_s)q(Y_s)],\\
\widetilde{p}(x)&=\E_x[p(Y_{t-s})]
\end{split}
\end{displaymath}
for all $x\in E^y$. Therefore, as $X$ and $Y$ are Markov processes, we conclude that
\begin{align*}
\E_x[p(X_t)q(X_s)]&=\E_x[q(X_s)\E_{X_s}[p(X_{t-s})]]\\
&=\E_x[\widetilde{p}(X_s)q(X_s)]\\
&=\E_x[\widetilde{p}(Y_s)q(Y_s)]\\
&=\E_x[q(Y_s)\E_{Y_s}[p(Y_{t-s})]]\\
&=\E_x[p(Y_t)q(Y_s)]
\end{align*}
for all $x\in E^Y$.
\end{proof}

\begin{proof}[Proof of Lemma~\ref{L:rowsLaddto0}]
Denote by $v\in\R^{N_n}$ the coordinates of the constant polynomial $\mathbf{1}$ with respect to the basis $h_1(x),\ldots, h_{N_n}(x)$. We have that
\begin{displaymath}
Hv={\rm 1_{M}},
\end{displaymath}
the vector of 1's in $\R^M$. Additionally by~\eqref{matrixHG},
\begin{displaymath}
HGv=(\Gcal\mathbf{1}(x_i))_{i=1}^M=0.
\end{displaymath}		
Hence
\begin{displaymath}
L{{\rm 1}_M}=LHv=HGv=0.
\end{displaymath}
\end{proof}

\begin{proof}[Proof of Theorem~\ref{thm:continuouscubature}]
$\ref{thm:continuouscubature:1}\Rightarrow \ref{thm:continuouscubature:2}$ Let $L$ be the transition rate matrix of the $n$-Markov cubature rule $Y$. Equations~\eqref{matrixHexptG} and~\eqref{eq:Matchmoments} imply that for all $i=1,\ldots, M$, $j=1,\ldots, N_n$ and $t\geq 0$
\begin{displaymath}
(H\exp(tG))_{ij}=(\exp(tL)H)_{ij}.
\end{displaymath}
Hence, $H\exp(tG)=\exp(tL)H$ for all $t\geq 0$. Differentiating with respect to $t$ and evaluating at $t=0$ we obtain \ref{thm:continuouscubature:2}.

$\ref{thm:continuouscubature:2} \Leftrightarrow \ref{thm:continuouscubature:3}$ This follows directly from Lemma~\ref{L:rowsLaddto0}.

$\ref{thm:continuouscubature:2} \Rightarrow \ref{thm:continuouscubature:4}$ By~\eqref{matrixHG}
\begin{displaymath}
\text{the $i$-th row of $HG$}=\Gcal \Hcal_n(x_i)^\top
\end{displaymath}
for all $i=1,\ldots, M$. On the other hand, the $i$-th row of $LH$ can be written as a cone combination of the form
\begin{equation}
\label{matrixLconecomb}
\sum_{j\neq i}L_{ij}(\Hcal_n^\top(x_j)-\Hcal_n^\top(x_i)),
\end{equation}
where the coefficients $L_{ij}$ are nonnegative. Since $HG=LH$ we conclude \ref{thm:continuouscubature:4}.

$\ref{thm:continuouscubature:4}\Rightarrow \ref{thm:continuouscubature:1}$ Condition \ref{thm:continuouscubature:4} implies the existence of coefficients $L_{ij}\geq 0$ for $i\neq j$ such that~\eqref{matrixLconecomb} is equal to the $i$-th row of $HG$ for all $i$. Hence, we can find a transition rate matrix $L$ such that $HG=LH$. This implies, by an induction argument, that
\begin{displaymath}
HG^l=L^lH\text{ for all $l\in\N$.}
\end{displaymath}
This in turn implies that
\begin{equation}
\label{matrixHexptG=matrixexptLH}
H\exp(tG)=\exp(tL)H.
\end{equation}
Since $(\exp(tL))_{t\geq 0}$ defines a transition semigroup, we can define a Markov process with state space $E^Y$ by
\begin{displaymath}
\P_{x_i}^Y(Y_t=x_j)=(\exp(tL))_{ij}.
\end{displaymath}
Equations~\eqref{matrixHexptG} and~\eqref{matrixHexptG=matrixexptLH} imply that $\E_x[h_j(X_t)]=\E_x[h_j(Y_t)]$ for all $x\in E$ and $j=1,\ldots, N_n$, i.e. $Y$ defines a continuous time $n$-Markov cubature rule.

\bigskip
Suppose now that $M=N_n$ and the matrix $H$ is invertible. For all $j=1,\ldots, N_n$ define $\widetilde{h}_j$ as the polynomial whose coordinates with respect to the basis $(h_1,\ldots,h_{N_n})$ are the $j$-th column of $H^{-1}$.  We have that $\widetilde{\beta}=(\widetilde{h}_1,\ldots,\widetilde{h}_{N_n})\subset\Pol_n(E)$ is a basis that satisfies $\widetilde{h}_{j}(x_i)=\delta_{ij}$.

Given a Markov cubature rule $Y$, $Y$ is a cubature rule with respect to any basis of $\Pol_n(E)$. In particular with respect to the basis $\widetilde{\beta}$. Observe that in this case
\begin{displaymath}
\widetilde{H}=(\widetilde{h}_j(x_i))_{ij}=\Id_{N_n},
\end{displaymath}
the identity matrix. Hence, the equivalence between \ref{thm:continuouscubature:1} an \ref{thm:continuouscubature:5} follows from the equivalence between \ref{thm:continuouscubature:1} and \ref{thm:continuouscubature:3}.
\end{proof}

\begin{proof}[Proof of Lemma~\ref{lem:existenceofQ}]
Equations~\eqref{matrixHexptG},~\eqref{longrunmoments} and~\eqref{eq:longrunweights} imply that
 	\begin{displaymath}
		\lim_{t\to\infty} H{\e^{tG}}=W^\top H,
	\end{displaymath}
where $W\in\R^{M\times M}$ is the matrix with all columns equal to $w$. Since $H$ has rank $N_n$ the set
	\begin{displaymath}
		\Bcal=\{\widetilde{W}H:\widetilde{W}\in\R^{M\times M}\text{ has positive entries}\}
	\end{displaymath}
is an open set in $\R^{M\times N_n}$. Then, for $t$ large enough and defining $P(t)=e^{tG}$, we have
	\begin{equation}
	\label{eq:HP=QH}
		HP(t)=Q(t)H\in\Bcal.
	\end{equation}
The argument to show that the rows of $Q(t)$ add up to 1 is similar to that of Lemma~\ref{L:rowsLaddto0}. Suppose that $v\in\R^{N_n}$ are the coordinates of the constant polynomial $\mathbf{1}$ with respect to the basis $h_1,\ldots,h_{N_n}$. We have that ${{\rm 1}_M}=Hv,$ where ${{\rm 1}_M}\in\R^M$ is the vector of 1's. Since $\mathbf{1}$ is an eigenvalue of $\Gcal$ with corresponding eigenvalue 0, $v$ is an eigenvector of $P(t)$ with eigenvalue 1. Hence, $HP(t)v={\rm 1_M.}$  This observation together with~\eqref{eq:HP=QH} implies that $Q(t){{\rm 1}_M}=Q(t)Hv={{\rm 1}_M}$  and the rows of $Q(t)$ add up to 1, i.e. $Q(t)$ is a probability matrix.
\end{proof}

\begin{proof}[Proof of Theorem~\ref{thm:discretecubature}]
Lemma~\ref{lem:existenceofQ} guarantees that for $\Delta$ large enough, there exists a probability matrix $Q\in\R^{M\times M}$ such that
\begin{equation}
 \label{eq:HP=QH2}
 H{\e}^{\Delta G}=QH,
 \end{equation}
with $H$ defined in~\eqref{matrixH}.

Let $Y$ be the time-homogeneous Markov Process with transition probability matrix $Q$ as in~\eqref{eq:HP=QH2} and state space $E^Y=\{x_1,\ldots,x_M\}$. By~\eqref{matrixHexptG}, $Y$ is an $n$-Markov cubature for $X$ on $\{\Delta\}$. Remark~\ref{rem:timesetsums} implies that $Y$ is also an $n$-Markov cubature for $X$ on $\{l\Delta:l\in\N\}$.
\end{proof}

% ----------------------------------------------------------------
%%%%%%%%%%%%%%%%%%%%%%%%%%%%%%%%%%%%%%%%%%%%%%%%%%%%%%%%%%
%% Bibliography
%%%%%%%%%%%%%%%%%%%%%%%%%%%%%%%%%%%%%%%%%%%%%%%%%%%%%%%%%%

%\pagebreak
\bibliographystyle{plainnat}
\bibliography{bibl.bib}

% ----------------------------------------------------------------
%%%%%%%%%%%%%%%%%%%%%%%%%%%%%%%%%%%%%%%%%%%%%%%%%%%%%%%%%%
%% Tables and Figures
%%%%%%%%%%%%%%%%%%%%%%%%%%%%%%%%%%%%%%%%%%%%%%%%%%%%%%%%%%

\newpage

\begin{table}
\centering
\begin{tabular}{|c|c|c|c|}
 \hline
 $\exp(X_0+x)$ & $\widetilde{P}^A_x$ &  $P^{A,bin}_x$ & Rel. diff. \\
 \hline
 \hline
  80 & 21.6180 & 21.6059 & 0.0006  \\
  85 & 18.0206 & 18.0374 & 0.0009 \\
  90 & 14.9271 & 14.9187 & 0.0006 \\
  95 & 12.2445 & 12.2314 & 0.0011 \\
  100 & 9.9539 & 9.9458 &  0.0008 \\
  105 & 8.0282 & 8.0281 &  0.0000 \\
  110 & 6.4324 & 6.4352 & 0.0004 \\
  115 & 5.1267 & 5.1265 & 0.0000 \\
  120 & 4.0697 & 4.0611 & 0.0021 \\                              
  \hline
\end{tabular}
\caption{American put option prices obtained with the approximate Markov cubature method compared to prices calculated with a binomial tree approximation in a Black--Scholes model. Parameters of the model and the option: $r=0.06$, $\sigma=0.4$, $X_0=\log(K)=\log(100)$ and $T=0.5$. The first column contains the initial asset prices considered. The second column contains the prices $\widetilde{P}^A_x$  obtained with the approximate Markov cubature method with $M=40$ cubature points, $n=4$ moments and $N_{time}=1000$ time steps. The third column contains the prices $P^{A,bin}_x$ obtained with a binomial tree approximation with 1000 time steps. The last column shows the relative difference.
}\label{T:PricesBS}
\end{table}

\begin{table}
\centering
\begin{tabular}{|c|c|c|c|}
 \hline
 $x$ & $\widetilde{P}^A_x$ &  $P^{A,LS}_x$ & Rel. diff. \\
 \hline
 \hline
  0.1 & 0.5436 & 0.5434 & 0.0003   \\
  0.2 & 0.4273 & 0.4286 & 0.0030 \\
  0.3 & 0.3251 & 0.3265 & 0.0043  \\
  0.4 & 0.2468 & 0.2478 & 0.0041 \\
  0.5 & 0.1839 & 0.1852 & 0.0069 \\
  0.6 & 0.1327 & 0.1339 & 0.0087 \\
  0.7 & 0.0912 & 0.0924 & 0.0132  \\
  0.8 & 0.0580 & 0.0590 & 0.0172 \\
  0.9 & 0.0323 & 0.0329 & 0.0196 \\                                        
  \hline                          
\end{tabular}
\caption{American put option prices obtained with the approximate Markov cubature method compared to prices calculated with a Longstaff--Schwartz algorithm in a Jacobi model for exchange rates. Parameters of the model and the option:  $r=0$, $\kappa=1$, $\theta=0.5$, $x_{min}=0$, $x_{max}=1$, $X_0=0$, $K=\exp(0.5)$ and $T=0.5$. The first column contains the initial values of $x$ considered. The second column contains the prices $\widetilde{P}^A_x$  obtained with the approximate Markov cubature method with $M=40$ cubature points, $n=4$ moments and $N_{time}=1000$ time steps. The third column contains the prices $P^{A,LS}_x$ obtained with a 1000 time step Longstaff--Schwartz algorithm. The last column shows the relative difference.}\label{T:PricesJacobi}
\end{table}

\begin{figure}
\centering
\includegraphics[scale=.5]{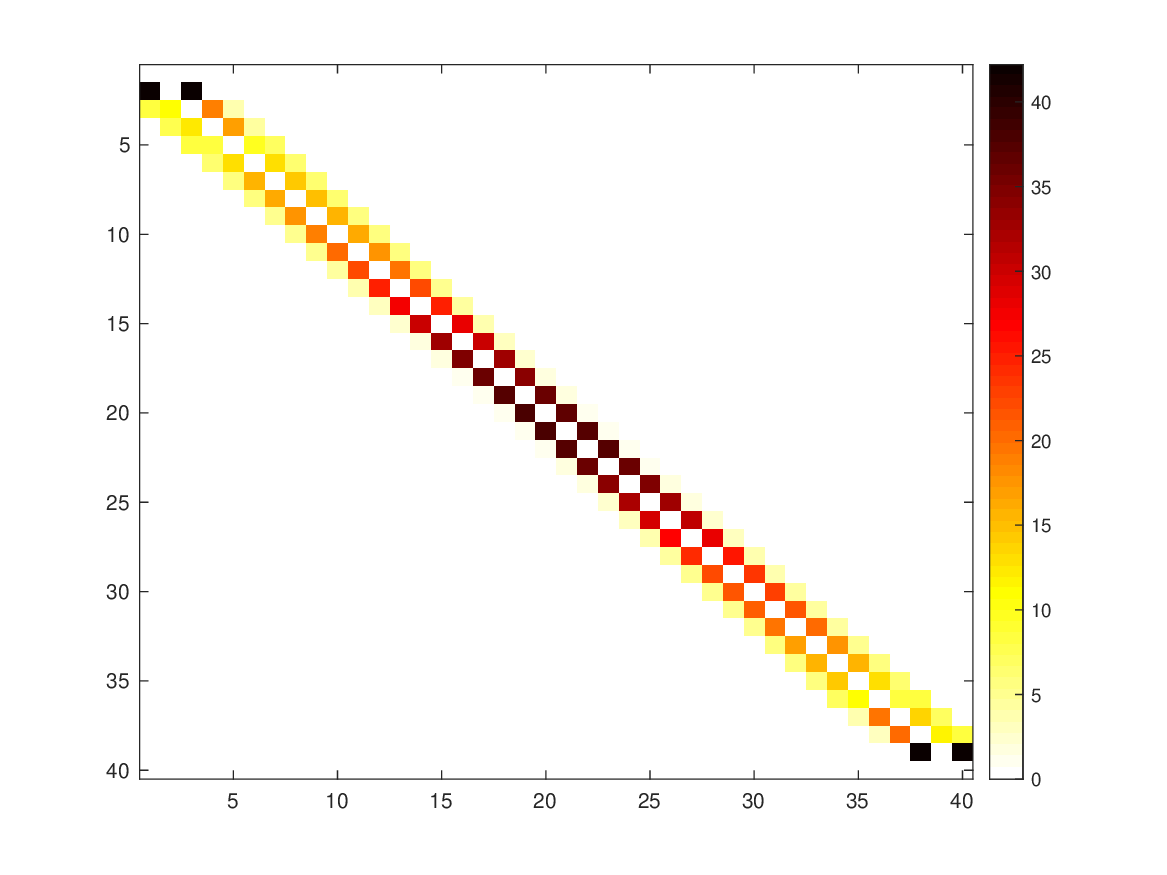}
\caption{Colormap of the off-diagonal values of the transition rate matrix $L$ obtained with the quadratic programming problem for the Black--Scholes model. Parameters:  $r=0.06$, $\sigma=0.4$, $T=0.5$, $M=40$ and $n=4$.}\label{fig:colormapBS}
\end{figure}

\begin{figure}
\centering
\includegraphics[scale=.5]{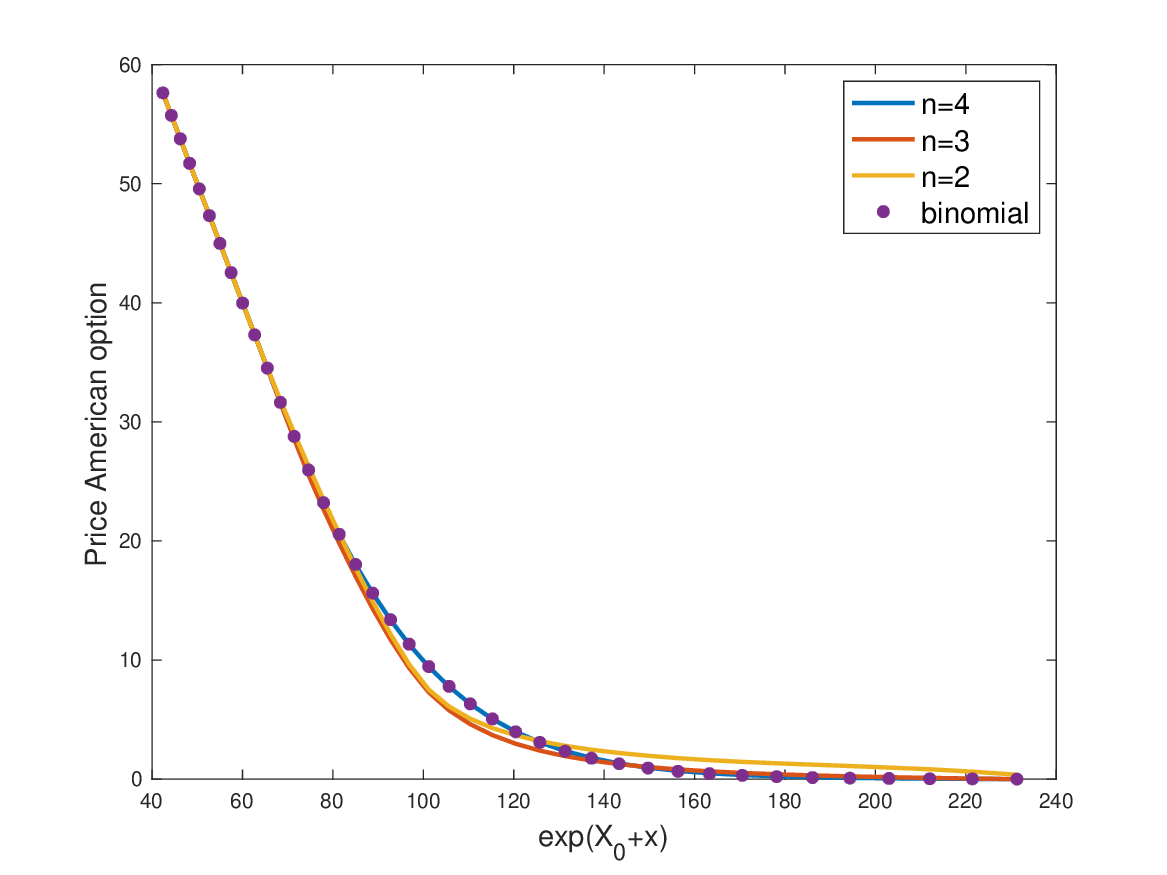}
\caption{American put option prices using the approximate Markov cubature method for different values of $n$ and the 1000--time step binomial tree approximation in a Black--Scholes model. Parameters:  $r=0.06$, $\sigma=0.4$, $X_0=\log(K)=\log(100)$, $T=0.5$, $M=40$ and $N_{time}=1000$.}\label{fig:American_BS}
\end{figure}

\begin{figure}
\centering
\includegraphics[scale=.5]{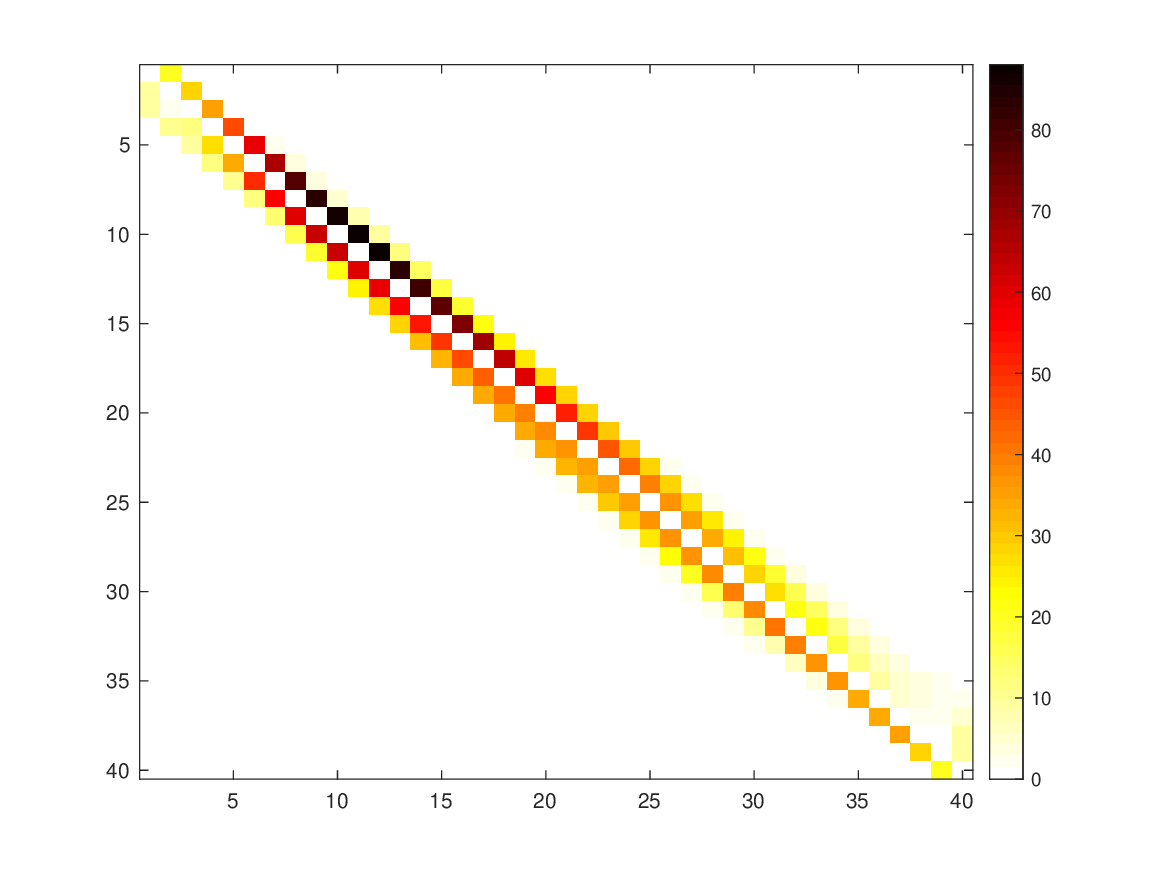}
\caption{Colormap of the off-diagonal values of the transition rate matrix $L$ obtained with the quadratic programming problem for the Jacobi model of exchange rates. Parameters:   $r=0$, $\kappa=1$, $\theta=0.5$, $x_{min}=0$, $x_{max}=1$, $M=40$ and $n=4$.}\label{fig:colormapJacobi}
\end{figure}

\begin{figure}
\centering
\includegraphics[scale=.5]{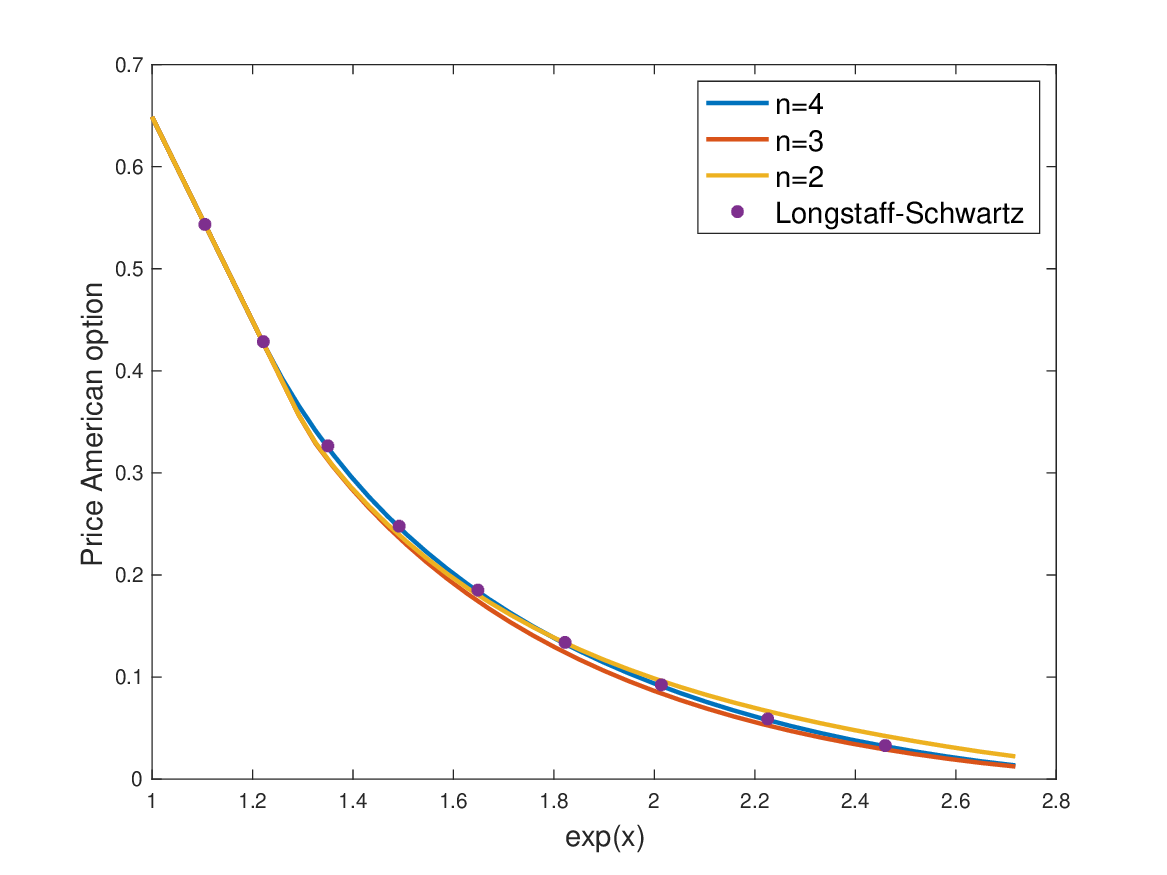}
\caption{American put option prices using the approximate Markov cubature method for different values of $n$ and the 1000--time step Longstaff--Schwartz algorithm in a Jacobi model of exchange rates. Parameters:  $r=0$, $\kappa=1$, $\theta=0.5$, $x_{min}=0$, $x_{max}=1$, $X_0=0$, $K=\exp(0.5)$, $T=0.5$, $M=40$ and $N_{time}=1000$.}\label{fig:American_Jacobi}
\end{figure}

\begin{figure}
\centering
\includegraphics[scale=.5]{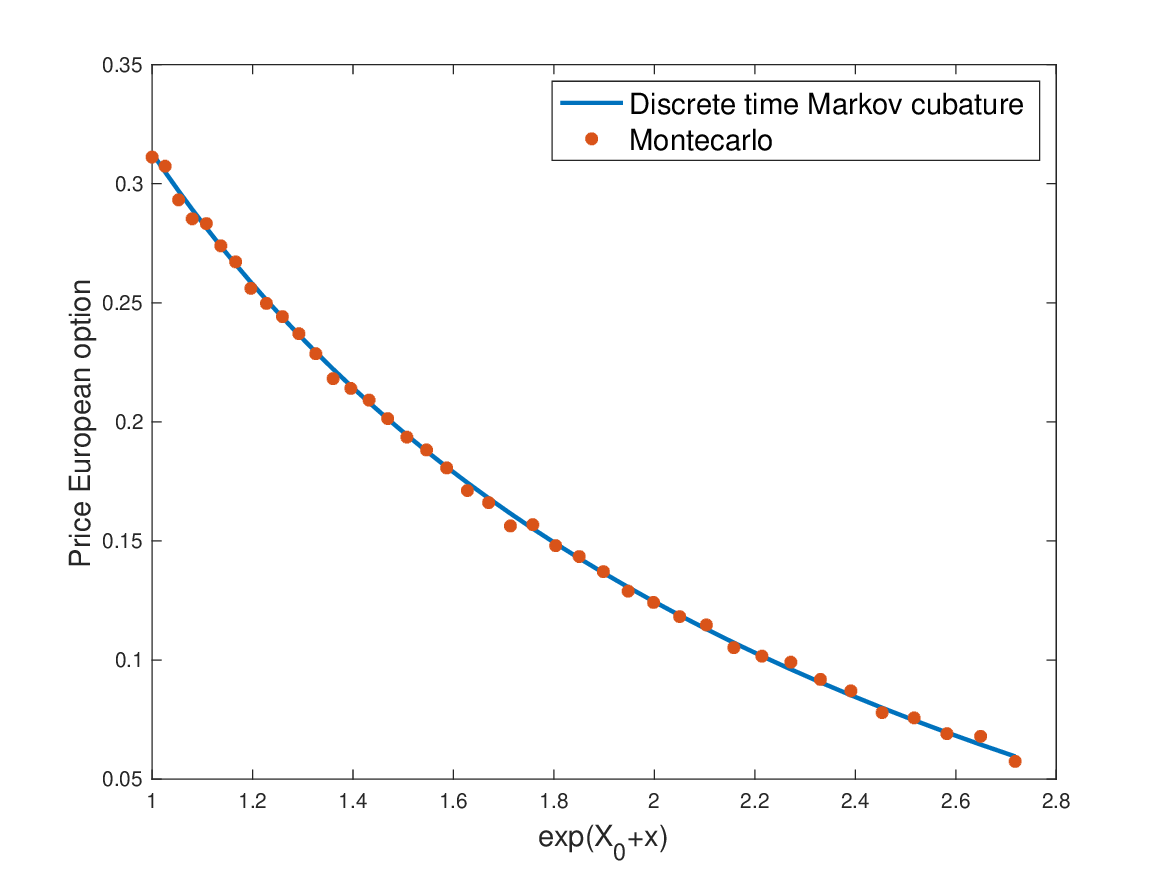}
\caption{European put option prices using the discrete time Markov cubature method and Monte-Carlo simulation in a Jacobi model of exchange rates. Parameters:  $r=0$, $\kappa=1$, $\theta=0.5$, $x_{min}=0$, $x_{max}=1$, $X_0=0$, $K=\exp(0.5)$, $T=1$, $M=40$ and $n=4$.}\label{fig:European_Jacobi}
\end{figure}

\begin{figure}
\centering
\includegraphics[scale=.5]{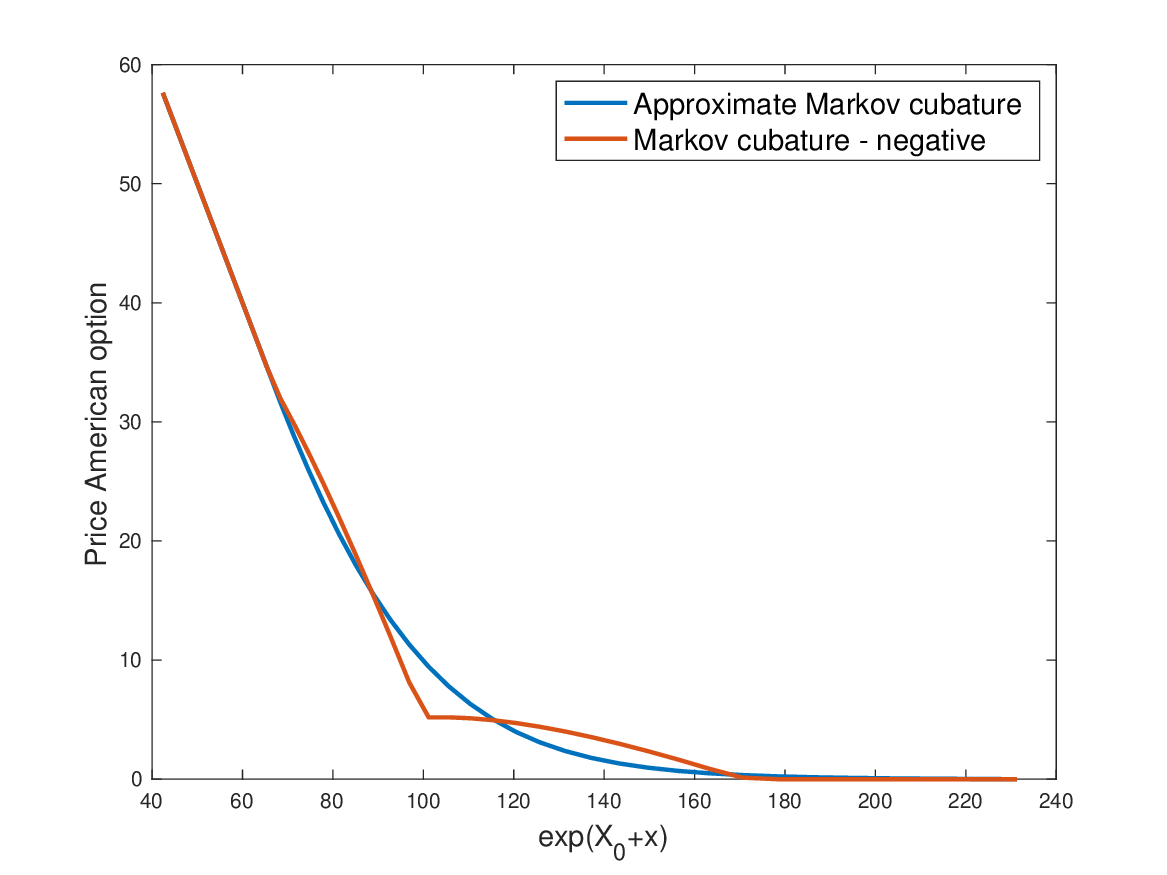}
\caption{American put option prices using negative ``probabilities'' and the approximate Markov cubature method in a Black--Scholes model. Parameters:  $r=0.06$, $\sigma=0.4$, $X_0=\log(K)=\log(100)$, $T=0.5$, $M=40$, $n=4$ and $N_{time}=1000$.}\label{fig:American_negative}
\end{figure}

\end{document}